\DeclareMathOperator*{\argmin}{arg\,min}
\DeclareMathOperator*{\argmax}{arg\,max}
\newtheorem{thm}{Theorem}
\newtheorem{defn}{Definition}
\newtheorem{remark}{Remark}
\begin{document}

\begin{frontmatter}



\title{Sampling of Graph Signals Based on Joint Time-Vertex Fractional Fourier Transform} 

\author[a,b]{Yu Zhang} 
\author[a,b]{Bing-Zhao Li\corref{mycorrespondingauthor}}
\cortext[mycorrespondingauthor]{Corresponding author}\ead{li\_bingzhao@bit.edu.cn}

\affiliation[a]{organization={School of Mathematics and Statistics, Beijing Institute of Technology},
	city={Beijing},
	postcode={100081}, 
	country={China}}

\affiliation[b]{organization={Beijing Key Laboratory on MCAACI, Beijing Institute of Technology},
	city={Beijing},
	postcode={100081}, 
	country={China}}

\begin{abstract}
With the growing demand for non-Euclidean data analysis, graph signal processing (GSP) has gained significant attention for its capability to handle complex time-varying data. This paper introduces a novel sampling method based on the joint time-vertex fractional Fourier transform (JFRFT), enhancing signal representation in time-frequency analysis and GSP. The JFRFT sampling theory is established by deriving conditions for the perfect recovery of jointly bandlimited signals, along with an optimal sampling set selection strategy. To further enhance the efficiency of large-scale time-vertex signal processing, the design of localized sampling operators is investigated. Numerical simulations and real data experiments validate the superior performance of the proposed methods in terms of recovery accuracy and computational efficiency, offering new insights into efficient time-varying signal processing.
\end{abstract}



\begin{keyword}
Graph signal processing \sep fractional Fourier transform \sep joint time-vertex fractional Fourier transform \sep sampling \sep sampling set selection
\end{keyword}

\end{frontmatter}

\section{Introduction}
\label{Intro}
With the increasing volume of complex data emerging from non-Euclidean spaces and irregular domains such as social networks, transportation systems, sensor networks, and biological connectomes, the field of graph signal processing (GSP) \cite{GFTlaplace,GFTadjacency1,GFTadjacency2,Gvertex,Goverview,Ghistory} has witnessed significant advancements in both theory and applications in recent years. GSP defines network elements as graph vertices and their interconnections as graph edges. Signals associated with these vertices are termed graph signals. To analyze and process these graph signals, numerous researchers have extended classical signal processing tools to the GSP domain, including the graph Fourier transform (GFT) \cite{GFTlaplace,GFTadjacency1,GFTadjacency2,Gvertex}, graph filters \cite{Graphonfilter}, graph sampling \cite{Gsamp, GFTsamp,GUncertainty,GFTSSS,GFTEfficient,Gdualizing,GFRFTsamp,GLCTsamp}, and graph signal estimation \cite{estimation}. These concepts and tools have substantially facilitated the application of GSP theory across diverse practical scenarios.

Traditional GSP theory is grounded in transformations based on orthogonal bases within finite-dimensional vector spaces \cite{GFTlaplace,GFTadjacency1,GFTadjacency2}. In this framework, graph signals are typically defined by assigning a real-valued quantity to each vertex. However, in many real-world scenarios such as sensor networks, social networks, or biological networks, graph signals are inherently time-varying. Consequently, each vertex may carry not only a real-valued signal but also temporal information \cite{HGFT,JFT}. In response to this, concepts and applications associated with joint harmonic analysis of time-varying graph signals have matured in recent years. Techniques such as the joint time-vertex Fourier transform (JFT) and associated filtering methods have established a powerful link between classical time-domain signal processing techniques and emerging GSP tools \cite{Jfilter1,Jfilter2,Jfilter3}. These developments, collectively referred to as time-vertex graph signal processing, integrate the discrete Fourier transform (DFT) in the time domain with the GFT in the vertex domain. This unified framework has been effectively deployed in various applications, including the transformation and filtering of time-varying graph signals \cite{JSpatio-temporal}, reconstruction of such signals \cite{Jsamp1,Jsamp2,Jsamp3}, prediction of joint spectral-temporal data \cite{Jprediction}, forecasting the evolution of static graph signals \cite{Jforecasting}, and semi-supervised learning \cite{Jlearning}.

Due to the combined nature of time and vertex domains, time-vertex graph signals often involve large volumes of data, making their observation, storage, and processing increasingly challenging. Consequently, sampling a subset of the data rather than recording all original information is highly beneficial, as it reduces data redundancy while preserving the essential characteristics of the signal \cite{Gsamp}. For instance, in social networks, surveying a small subset of individuals can effectively approximate the overall opinion of population, significantly saving resources. In recent years, sampling methods tailored for joint time-vertex graph signals have gained substantial attention. The spectral representation of these signals can be obtained using the JFT \cite{JFT}. For example, \cite{Jproduct} employs a product graph model to define smooth signals in the joint time-vertex domain and proposes a corresponding recovery strategy. Additionally, \cite{Jreconstruction} introduces a batch reconstruction method for time-varying graph signals based on temporal difference smoothness. Furthermore, a critical sampling set algorithm with minimal sample requirements has been developed \cite{Jsamp3}, while \cite{HGFT} extends the time domain into the Hilbert space, proposing a generalized graph signal sampling framework.

Despite capability of the JFT in sampling time-vertex graph signals, its structure is analogous to that of the DFT and GFT. Specifically, JFT performs a purely time to frequency transformation and a purely vertex to graph spectrum transform. These transformations rely on the complete signal information in both domains but lack flexibility during the conversion process. Moreover, JFT exhibits limitations in effectively handling \textit{chirp-like} features in graph signals \cite{GFRFT} and faces challenges in addressing degrees of freedom and efficiently processing non-stationary signals \cite{GLCT}. Consequently, the application of JFT remains limited across various practical domains. To address these limitations, \cite{JFRFT} introduced the joint time-vertex fractional Fourier transform (JFRFT), an extension of the JFT that integrates the discrete fractional Fourier transform (DFRFT) \cite{DFRFT} and the graph fractional Fourier transform (GFRFT) \cite{GFRFT}. The JFRFT introduces two additional parameters, enabling signal analysis within the fractional time and fractional graph spectrum domains. This enhanced flexibility, stemming from its dual-domain nature, allows for improved observation adaptability.  

Inspired by the sampling techniques for non-stationary graph signals in static graphs \cite{GLCTsamp} and the JFRFT framework, this paper presents a novel JFRFT-based sampling framework. This framework integrates the DFRFT in the time domain with the GFRFT in the vertex domain. By incorporating fractional-order operations, the GFRFT extends the GFT to model a broader range of signals. Building upon this foundation, the JFRFT unifies these capabilities into a cohesive time-vertex analysis framework, effectively capturing complex time-graph correlations through its tensor product structure \cite{JFRFT}. This structure enables the JFRFT to generalize the concept of bandlimitedness, overcoming the limitations of JFT and providing a more flexible foundation for modeling non-stationary and multi-angle signals. The proposed sampling theory establishes conditions for the perfect recovery of joint bandlimited signals and introduces an optimal sampling set selection strategy.

Furthermore, most conventional sampling methods for bandlimited signals rely on transform bases or corresponding vectors derived from eigendecomposition. In this work, within the JFRFT framework, we further investigate the connection between sampling strategies and localization operators. Specifically, we introduce a novel joint fractional localization filtering operator that not only serves as a surrogate for representing the sampling process, but also naturally facilitates the derivation of corresponding signal reconstruction formulas and error bounds. Optimal sampling operators can be equivalently expressed in terms of this localization operator. Numerical simulations and real-world experiments validate the superior performance of the proposed sampling method in terms of both recovery accuracy and computational efficiency. These findings offer new insights into effective time-varying signal processing strategies.

The main contributions are as follows.
\begin{itemize}
	\item{First, a unified JFRFT sampling theory is established by deriving perfect recovery conditions for joint bandlimited signals and introducing optimal sampling set selection strategies.}
	\item{Second, the relationship between sampling strategies and localization operators is analyzed to improve the efficiency of large-scale time-vertex signal processing.}
	\item{Finally, the proposed methods are validated through numerical experiments, demonstrating advantages in both reconstruction accuracy and computational efficiency.}
\end{itemize}

The remainder of this paper is organized as follows. Section \ref{Preliminaries} introduces the fundamental theory of graph signals, graph transforms, and sampling. Section \ref{JFRFTSampling} proposes the JFRFT-based sampling method and the optimal sampling set selection strategies. Section \ref{Relationship} investigates the relationship between sampling strategies and localization operators. Section \ref{Experiments} presents experimental results that highlight the advantages of proposed methods in recovery accuracy and computational efficiency. Finally, Section \ref{Conclusion} concludes the paper.

\section{Preliminaries}
\label{Preliminaries}
This section provides a concise overview of the GFRFT within the graph signal processing framework, along with its sampling theory and previous developments in the JFRFT.

\subsection{Graph fractional Fourier transform}
In GSP \cite{GFTlaplace,GFTadjacency1,GFTadjacency2}, a signal with a high-dimensional topological structure can be represented by a graph $\mathcal{G} = (\mathcal{V}, \mathcal{E}, \mathbf{W})$, where $\mathcal{V} = \{ v_0, \dots, v_{N-1} \}$ denotes the vertex set, and $\mathbf{W} \in \mathbb{C}^{N \times N}$ is the weighted adjacency matrix. Each entry $w_{m,n}$ specifies the weighted connection from vertex $v_m$ to vertex $v_n$. For undirected graphs, $\mathbf{W}$ is symmetric. A graph signal is defined as a map $x: \mathcal{V} \to \mathbb{C}$, and can be represented as a complex-valued vector, $\bm{x} = [x_0, x_1, \ldots, x_{N-1}]^{\top} \in \mathbb{C}^N$. 

The GFT of the graph signal $\bm{x} \in \mathbb{C}^N$ is defined as 
\begin{equation}
	\hat{\bm{x}} = \mathbf{U}^{-1} \bm{x}, \label{GFT}
\end{equation}
with its corresponding inverse GFT given by $\bm{x} = \mathbf{U} \hat{\bm{x}}$, where $\hat{\bm{x}}$ represents the frequency coefficients in the GFT domain. 

The GFT matrix $\mathbf{U}^{-1}$ is derived from the eigendecomposition of a graph shift operator $\mathbf{S}$, which is typically defined as the adjacency matrix $\mathbf{W}$, the graph Laplacian $\mathbf{L} = \mathbf{D}^{\mathrm{deg}} - \mathbf{W}$, or their normalized variants. Here, $\mathbf{D}^{\mathrm{deg}} = \mathrm{diag}(\mathbf{W} \mathbf{1})$ denotes the degree matrix. The spectral decomposition of $\mathbf{S}$ is given by 
\[
\mathbf{S} = \mathbf{U} \mathbf{\Delta} \mathbf{U}^{-1},
\] 
where the columns of $\mathbf{U}$ are the eigenvectors of $\mathbf{S}$, and the diagonal elements $\delta_0, \dots, \delta_{N-1}$ of $\mathbf{\Delta} \in \mathbb{C}^{N \times N}$ represent the graph frequencies, typically ordered from low to high.

For DFRFT in the time domain \cite{DFRFT}, it can be interpreted as a linear operator that rotates the time axis towards the frequency axis by an angle $\theta = \alpha\pi/2$, rather than the $\pi/2$ rotation in the DFT. Based on the eigenvalue decomposition method, the DFRFT is defined as
\begin{equation}
	\mathbf{F}^{\alpha}[m,n] = \sum_{k=0}^{N-1} \bm{u}_k[m] e^{j\frac{\pi}{2}k\alpha} \bm{u}_k[n],  \label{DFRFT}
\end{equation}
where $\bm{u}_k[n]$ represents the $k$-th discrete Hermite-Gaussian sequence, which serves as the eigenfunction of the DFT \cite{DFT}. The term $e^{j\frac{\pi}{2}k\alpha}$ can be interpreted as the $\alpha$-th power of eigenvalues $e^{j\frac{\pi}{2}k}$ of the DFT matrix.  

Analogous to the DFRFT, the generalized $\beta$th GFRFT \cite{GFRFT} of the signal $\bm{x}$ is defined as
\begin{equation}
	\hat{\bm{x}} = \mathbf{F}^{\beta}_{\mathcal{G}} \bm{x} = \mathbf{Q\Lambda}^{\beta}\mathbf{Q}^{-1} \bm{x}, \quad \beta \in \mathbb{R}, \label{GFRFT}
\end{equation}
with its inverse given by
\begin{equation}
	\bm{x} = \mathbf{F}^{-\beta}_{\mathcal{G}} \hat{\bm{x}} = \mathbf{Q\Lambda}^{-\beta} \mathbf{Q}^{-1} \hat{\bm{x}},\label{IGFRFT}
\end{equation}
where the matrices $\mathbf{Q}$ and $\mathbf{\Lambda}$ are obtained from the spectral decomposition of the GFT matrix,  
\[
\mathbf{F}_{\mathcal{G}} = \mathbf{U}^{-1} = \mathbf{Q\Lambda Q}^{-1}.
\]  
The fractional power $\mathbf{\Lambda}^{\beta}$ is computed by raising each diagonal entry of $\mathbf{\Lambda}$ to the power of $\beta$. Notably, the GFRFT reduces to the original signal when $\beta = 0$, and coincides with the conventional GFT when $\beta = 1$.
\begin{remark}
	To avoid notational redundancy, the orders $\alpha$ and $\beta$ are used to distinguish between the DFRFT and GFRFT, respectively. Specifically, $\mathbf{F}^\alpha$ denotes the DFRFT, while $\mathbf{F}^\beta$ refers to the GFRFT, without altering mathematical structure of the transform.
\end{remark}

\subsection{Sampling theory of graph signals}
Consider a sampling set $\mathcal{S} = \left( \mathcal{S}_0, \dots, \mathcal{S}_{M-1} \right)$, where each $\mathcal{S}_i \in \{0, 1, \dots, N-1\}$. The dimension coefficient $M$ denotes the number of sampling indices for the sampled signal $\bm{x}_{M} \in \mathbb{C}^{M}$, derived from the original signal $\bm{x} \in \mathbb{C}^N$.  

The sampling operator $\mathbf{\Psi}$ is an $M \times N$ binary matrix, defined as a linear mapping from $\mathbb{C}^N$ to $\mathbb{C}^{M}$ \cite{GFTsamp}, with
\begin{equation}
	\mathbf{\Psi}_{i,j} =
	\begin{cases}
		1, & \text{if } j = \mathcal{S}_i, \\
		0, & \text{otherwise}.
	\end{cases}  \label{Psi}
\end{equation}

The interpolation operator $\mathbf{\Phi}$ maps $\mathbb{C}^{M}$ back to $\mathbb{C}^N$. The sampling and interpolation processes are given by $\bm{x}_{M} = \mathbf{\Psi} \bm{x}$ and $\bm{x}' = \mathbf{\Phi} \bm{x}_{M} = \mathbf{\Phi} \mathbf{\Psi} \bm{x}$, where $\bm{x}'$ approximates or reconstructs $\bm{x}$.

\subsection{Joint time-vertex fractional Fourier transform}
To facilitate joint time-vertex processing, it is essential to formalize the notion of a \textit{time-vertex graph signal} \cite{JFT}. Denote $\mathbf{X} \in \mathbb{C}^{N \times T}$ as a time-vertex signal with its vectorized form $\bm{x} = \text{vec}(\mathbf{X}) \in \mathbb{C}^{NT}$. Here, $\bm{x}_t \in \mathbb{C}^N$ represents the graph signal at time $t$, and the matrix $\mathbf{X} = [\bm{x}_1, \bm{x}_2, \ldots, \bm{x}_T] \in \mathbb{C}^{N \times T}$ describes the complete time-varying graph signal. The transpose and Hermitian transpose of $\mathbf{X}$ are denoted by $\mathbf{X}^{\top}$ and $\mathbf{X}^{\mathrm{H}}$, respectively.

Given a signal $\mathbf{X}$, where its columns correspond to graph signals and its rows represent time series signals at each graph vertex, the DFT is expressed as 
\[
\text{DFT}\left\{ \mathbf{X} \right\} = \mathbf{X} \mathbf{U}_{T}^{\top},
\]
where $\mathbf{U}_{T}$ is the DFT matrix \cite{DFT}, defined element wise as $ \exp(-j2\pi tk/T)$, for $t, k = 0, 1, \ldots, T-1$. Similarly, the GFT is defined as
\[
\text{GFT}\left\{ \mathbf{X} \right\} = \mathbf{U}^{-1} \mathbf{X},
\]
where $\mathbf{U}^{-1}$ is the GFT matrix as defined in Eq. \eqref{GFT}. Consequently, the JFT is given by
\[
\text{JFT}\left\{ \mathbf{X} \right\} = \mathbf{U}^{-1} \mathbf{X} \mathbf{U}_{T}^{\top}.
\]

Since graph signals are typically vectorized, the JFT in vector form becomes
\[
\bm{\hat{x}} = \text{JFT}\left\{ \bm{x} \right\} = \mathbf{J}\bm{x} = (\mathbf{U}^{\top}_{T} \otimes \mathbf{U}^{-1}) \bm{x},
\]
where $\otimes$ represents the Kronecker product, and the relationship between the matrix and vectorized forms follows Kronecker product properties.

The JFT can be extended to the fractional domain by defining the JFRFT. For fractional orders $(\alpha, \beta) \in \mathbb{R}$, the JFRFT is expressed as  
\begin{equation}
	\mathbf{\hat{X}} = \text{JFRFT}\left\{ \mathbf{X} \right\} = \mathbf{F}^{\beta} \mathbf{X} (\mathbf{F}^{\alpha})^{\mathrm{H}},
\end{equation}
where the DFRFT is defined as Eq. \eqref{DFRFT} and $\mathbf{F}^{\alpha}$ represents the GFRFT in Eq. \eqref{GFRFT} \cite{JFRFT}. When vectorizing the time-vertex signal $\mathbf{X}$ into $\bm{x} = \text{vec}(\mathbf{X})$, the vectorized JFRFT becomes
\begin{equation}
	\hat{\bm{x}} = \text{JFRFT}\left\{ \bm{x} \right\} = \mathbf{J}^{\alpha,\beta} \bm{x} = (\mathbf{F}^{\alpha} \otimes \mathbf{F}^{\beta}) \bm{x},
\end{equation}
where, the operator $\mathbf{J}^{\alpha, \beta}(\cdot): \mathbb{C}^{NT} \rightarrow \mathbb{C}^{NT}$ denotes the JFRFT in vectorized form, and its equivalence to the matrix form arises from properties of the Kronecker product.

\section{Sampling theory of the JFRFT}
\label{JFRFTSampling}
The GFRFT sampling framework \cite{GFRFTsamp,GLCTsamp} is extended to a novel JFRFT-based approach that leverages joint orthonormal bases and spectral domain sampling to reconstruct bandlimited time-vertex signals from subsampled observations. A central step is the formalization of $\alpha,\beta$-bandlimitedness in the JFRFT domain, in line with the common emphasis on signal smoothness or bandlimitedness in graph signal sampling \cite{Gsamp}.

\subsection{$\alpha,\beta$-Bandlimited time-vertex graph signals}
\label{sec3.1}
To define bandlimited signals in the JFRFT domain, we begin by constructing a sampling framework in the GFRFT domain using fractional spectral bandlimitedness. Let $\mathcal{S}_G \subset \mathcal{V}$ represent a vertex subset. The vertex-limiting operator is $\mathbf{D}_G = \mathrm{diag}(\mathbf{1}_{\mathcal{S}_G})$, where $\mathbf{1}_{\mathcal{S}_G}$ is a binary indicator vector for $\mathcal{S}_G$ \cite{GUncertainty}. Notably, $\mathbf{D}_G = \mathbf{\Psi}^{\top} \mathbf{\Psi}$, with $\mathbf{\Psi}$ as defined in Eq.~\eqref{Psi}.

Given the inverse GFRFT matrix $\mathbf{F}^{-\beta}$ (cf. Eq.~\eqref{IGFRFT}) and a frequency index subset $\mathcal{F}_G \subset \hat{\mathcal{G}} = \{1, \dots, N\}$, the spectral-limiting operator is defined as $\mathbf{B}_G^{\beta} = \mathbf{F}^{-\beta} \mathbf{\Sigma}_{\mathcal{F}_G} \mathbf{F}^{\beta}$, where $\mathbf{\Sigma}_{\mathcal{F}_G} = \mathrm{diag}(\mathbf{1}_{\mathcal{F}_G})$. The operator $\mathbf{B}_G^{\beta}$ projects signals onto the subspace spanned by $\mathbf{F}^{-\beta}$ indexed by $\mathcal{F}_G$. Both $\mathbf{D}_G$ and $\mathbf{B}_G^{\beta}$ are Hermitian and idempotent, acting as orthogonal projectors. They satisfy
\[
\mathbf{D}_G \mathbf{X} = \mathbf{X}, \text{and} \ \ \mathbf{B}_G^{\beta} \mathbf{X} = \mathbf{X}.
\]
The complement of $\mathcal{S}_G$, denoted $\overline{\mathcal{S}}_G$, satisfies $\mathcal{V} = \mathcal{S}_G \cup \overline{\mathcal{S}}_G$, and the corresponding projection operator is $\overline{\mathbf{D}}_G = \mathbf{I} - \mathbf{D}_G$. Similarly, for $\mathcal{F}_G$, the complement set $\overline{\mathcal{F}}_G$ defines the projection $\overline{\mathbf{B}}_G^{\beta} = \mathbf{I} - \mathbf{B}_G^{\beta}$.

For the time domain, let $\mathcal{S}_T \subset \mathcal{T} = \{ t_0, \dots, t_{T-1} \}$ represent a time subset. The time-limiting operator is $\mathbf{D}_T = \mathrm{diag}(\mathbf{1}_{\mathcal{S}_T})$, and the fractional spectral-limiting operator is $\mathbf{B}_T^{\alpha} = \mathbf{F}^{-\alpha} \mathbf{\Sigma}_{\mathcal{F}_T} \mathbf{F}^{\alpha}$, where $\mathcal{F}_T \subset \hat{\mathcal{T}} = \{1, \dots, T\}$, and $\mathbf{\Sigma}_{\mathcal{F}_T} = \mathrm{diag}(\mathbf{1}_{\mathcal{F}_T})$. These project onto the sets of time domain supported and $\alpha$-bandlimited signals, respectively. The time domain constraints are
\[
\mathbf{X} \mathbf{D}_T = \mathbf{X}, \text{and} \ \  \mathbf{X} \mathbf{B}_T^{\alpha} = \mathbf{X},
\]
with complementary projections $\overline{\mathcal{S}}_T = \mathcal{T} \setminus \mathcal{S}_T$ and $\overline{\mathcal{F}}_T = \hat{\mathcal{T}} \setminus \mathcal{F}_T$, and corresponding orthogonal projections $\overline{\mathbf{D}}_T = \mathbf{I} - \mathbf{D}_T$, $\overline{\mathbf{B}}_T^{\alpha} = \mathbf{I} - \mathbf{B}_T^{\alpha}$.

This unified operator-based approach defines jointly bandlimited signals in the JFRFT domain. A signal $\mathbf{X} \in \mathbb{C}^{N \times T}$ is jointly supported on $\mathcal{S}_G \times \mathcal{S}_T$ and bandlimited over $\mathcal{F}_G \times \mathcal{F}_T$ if and only if
\[
\mathbf{D}_G \mathbf{X} \mathbf{D}_T = \mathbf{X}, \text{and} \ \  \mathbf{B}_G^{\beta} \mathbf{X} \mathbf{B}_T^{\alpha} = \mathbf{X}.
\]

Let $\bm{x} = \mathrm{vec}(\mathbf{X}) \in \mathbb{C}^{NT}$ denote the vectorized form of the time-vertex signal. We define the joint support and bandlimiting projectors as $\mathbf{D}_J = \mathbf{D}_T \otimes \mathbf{D}_G$ and $\mathbf{B}_J^{\alpha, \beta} = \mathbf{B}_T^{\alpha} \otimes \mathbf{B}_G^{\beta}$. The joint localization conditions are expressed as
\begin{equation}
	\mathbf{D}_J \bm{x} = \bm{x}, \quad \mathbf{B}_J^{\alpha, \beta} \bm{x} = \bm{x}.
\end{equation}
By leveraging the Kronecker product, the joint bandlimiting operator is
\[
\mathbf{B}_J^{\alpha, \beta} = \mathbf{J}^{-\alpha, -\beta} \mathbf{\Sigma}_{\mathcal{F}_J} \mathbf{J}^{\alpha, \beta},
\]
where $\mathcal{F}_J = \mathcal{F}_T \times \mathcal{F}_G$ is the joint spectral support set. Thus, we arrive at the joint perfect bandlimited theorem in the JFRFT domain.
\begin{thm} \label{bandlimited}
	Let $\bm{x} \in \mathbb{C}^{NT}$ represent a time-vertex signal. Define the joint support and frequency projection operators $\mathbf{D}_J$ and $\mathbf{B}_J^{\alpha, \beta}$. Then, $\bm{x}$ is perfectly localized over $\mathcal{S}_G \times \mathcal{S}_T$ and $\mathcal{F}_G \times \mathcal{F}_T$ if and only if,
	$
	\lambda_{\max} \left( \mathbf{B}_J^{\alpha, \beta} \mathbf{D}_J \mathbf{B}_J^{\alpha, \beta} \right) = 1.
	$
	In this case, $\bm{x}$ is an eigenvector associated with the unit eigenvalue. Equivalently, this condition is characterized by
	$
	\left\| \mathbf{B}_J^{\alpha, \beta} \mathbf{D}_J \right\|_2 = \left\| \mathbf{D}_J \mathbf{B}_J^{\alpha, \beta} \right\|_2 = 1.
	$
\end{thm}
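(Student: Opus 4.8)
The plan is to treat this as a standard fact about products of orthogonal projectors, transplanted to the Kronecker-structured operators $\mathbf{D}_J = \mathbf{D}_T \otimes \mathbf{D}_G$ and $\mathbf{B}_J^{\alpha,\beta} = \mathbf{B}_T^{\alpha}\otimes\mathbf{B}_G^{\beta}$. First I would record the structural facts to lean on: since $\mathbf{D}_G,\mathbf{B}_G^{\beta},\mathbf{D}_T,\mathbf{B}_T^{\alpha}$ are each Hermitian and idempotent (using that the DFRFT and GFRFT are unitary, i.e. $(\mathbf{F}^{\alpha})^{-1}=(\mathbf{F}^{\alpha})^{\mathrm H}$ and likewise for $\mathbf{F}^{\beta}$, so the $\mathbf{B}$-operators are genuine orthogonal projectors), the Kronecker identities $(\mathbf{A}\otimes\mathbf{B})^{\mathrm H}=\mathbf{A}^{\mathrm H}\otimes\mathbf{B}^{\mathrm H}$ and $(\mathbf{A}\otimes\mathbf{B})(\mathbf{C}\otimes\mathbf{D})=\mathbf{AC}\otimes\mathbf{BD}$ show that $\mathbf{D}_J$ and $\mathbf{B}_J^{\alpha,\beta}$ are themselves orthogonal projectors, of operator norm $1$ (assuming the index sets are nonempty). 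In particular $\mathbf{M}:=\mathbf{B}_J^{\alpha,\beta}\mathbf{D}_J\mathbf{B}_J^{\alpha,\beta}$ is Hermitian, and for any $\bm v$ one has $\bm v^{\mathrm H}\mathbf{M}\bm v=\|\mathbf{D}_J\mathbf{B}_J^{\alpha,\beta}\bm v\|_2^2\in[0,\|\bm v\|_2^2]$, so $\mathbf{M}\succeq 0$ and $\lambda_{\max}(\mathbf{M})\in[0,1]$.

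Next I would prove the two implications. Forward direction: if $\bm x\neq\bm 0$ is perfectly localized, i.e. $\mathbf{D}_J\bm x=\bm x$ and $\mathbf{B}_J^{\alpha,\beta}\bm x=\bm x$, then $\mathbf{M}\bm x=\mathbf{B}_J^{\alpha,\beta}\mathbf{D}_J\bm x=\mathbf{B}_J^{\alpha,\beta}\bm x=\bm x$, so $1$ is an eigenvalue of $\mathbf{M}$ with eigenvector $\bm x$, and with $\lambda_{\max}(\mathbf{M})\le 1$ this forces $\lambda_{\max}(\mathbf{M})=1$; this computation also gives the claimed ``$\bm x$ is an eigenvector for the unit eigenvalue.'' Converse: let $\bm x$ be a unit-norm eigenvector for eigenvalue $1$, so $\mathbf{M}\bm x=\bm x$. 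Then $1=\bm x^{\mathrm H}\mathbf{M}\bm x=\|\mathbf{D}_J\mathbf{B}_J^{\alpha,\beta}\bm x\|_2^2\le\|\mathbf{B}_J^{\alpha,\beta}\bm x\|_2^2\le\|\bm x\|_2^2=1$, so every inequality is an equality. From $\|\mathbf{B}_J^{\alpha,\beta}\bm x\|_2=\|\bm x\|_2$ and the Pythagorean decomposition $\|\bm x\|_2^2=\|\mathbf{B}_J^{\alpha,\beta}\bm x\|_2^2+\|(\mathbf{I}-\mathbf{B}_J^{\alpha,\beta})\bm x\|_2^2$ (valid because $\mathbf{B}_J^{\alpha,\beta}$ is an orthogonal projector) we get $\mathbf{B}_J^{\alpha,\beta}\bm x=\bm x$; then $\mathbf{D}_J\mathbf{B}_J^{\alpha,\beta}\bm x=\mathbf{D}_J\bm x$ and $\|\mathbf{D}_J\bm x\|_2=\|\bm x\|_2$ yield, by the same argument applied to $\mathbf{D}_J$, $\mathbf{D}_J\bm x=\bm x$. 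Hence $\bm x$ is perfectly localized.

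Finally I would dispatch the norm reformulation. Writing $\mathbf{P}:=\mathbf{B}_J^{\alpha,\beta}\mathbf{D}_J$, idempotency of the two projectors gives $\mathbf{P}^{\mathrm H}\mathbf{P}=\mathbf{D}_J\mathbf{B}_J^{\alpha,\beta}\mathbf{D}_J$ and $\mathbf{P}\mathbf{P}^{\mathrm H}=\mathbf{B}_J^{\alpha,\beta}\mathbf{D}_J\mathbf{B}_J^{\alpha,\beta}=\mathbf{M}$, so $\|\mathbf{B}_J^{\alpha,\beta}\mathbf{D}_J\|_2^2=\|\mathbf{P}\|_2^2=\lambda_{\max}(\mathbf{P}\mathbf{P}^{\mathrm H})=\lambda_{\max}(\mathbf{M})$, while $\|\mathbf{D}_J\mathbf{B}_J^{\alpha,\beta}\|_2=\|\mathbf{P}^{\mathrm H}\|_2=\|\mathbf{P}\|_2$. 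Therefore $\lambda_{\max}(\mathbf{M})=1$ if and only if $\|\mathbf{B}_J^{\alpha,\beta}\mathbf{D}_J\|_2=\|\mathbf{D}_J\mathbf{B}_J^{\alpha,\beta}\|_2=1$, which closes the equivalence. I do not expect a genuine obstacle; the only points requiring care are (i) that the orthogonal-projector property of $\mathbf{B}_J^{\alpha,\beta}$ genuinely rests on unitarity of the fractional transforms, which should be made explicit, and (ii) the equality/Pythagoras bookkeeping in the converse, so that perfect localization of $\bm x$ itself — not merely of $\mathbf{B}_J^{\alpha,\beta}\bm x$ or $\mathbf{D}_J\bm x$ — is actually recovered.
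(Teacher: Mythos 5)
Your proof is correct, and its forward direction is exactly the paper's: apply $\mathbf{B}_J^{\alpha,\beta}\mathbf{D}_J\mathbf{B}_J^{\alpha,\beta}$ to a perfectly localized $\bm x$ and read off the unit eigenvalue. Where you diverge is in the converse and in the norm reformulation, and in both places your route is the tighter one. The paper's converse manipulates $\mathbf{B}_J^{\alpha,\beta}\mathbf{D}_J\mathbf{B}_J^{\alpha,\beta}\bm x=\mathbf{B}_J^{\alpha,\beta}\mathbf{D}_J(\mathbf{B}_J^{\alpha,\beta})^2\bm x$ and asserts that this ``implies $\mathbf{B}_J^{\alpha,\beta}\bm x=\bm x$,'' then invokes Rayleigh--Ritz on $\mathbf{D}_J\mathbf{B}_J^{\alpha,\beta}$ to get $\mathbf{D}_J\bm x=\bm x$; as written, the first step is only an idempotency identity and does not by itself force $\mathbf{B}_J^{\alpha,\beta}\bm x=\bm x$. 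Your equality chain $1=\bm x^{\mathrm H}\mathbf{M}\bm x=\|\mathbf{D}_J\mathbf{B}_J^{\alpha,\beta}\bm x\|_2^2\le\|\mathbf{B}_J^{\alpha,\beta}\bm x\|_2^2\le\|\bm x\|_2^2=1$, combined with the Pythagorean identity for an orthogonal projector, supplies exactly the missing justification and recovers localization of $\bm x$ itself, not merely of its projections. Likewise, the paper never really proves the equivalence with $\|\mathbf{B}_J^{\alpha,\beta}\mathbf{D}_J\|_2=\|\mathbf{D}_J\mathbf{B}_J^{\alpha,\beta}\|_2=1$, whereas your observation that $\mathbf{P}\mathbf{P}^{\mathrm H}=\mathbf{B}_J^{\alpha,\beta}\mathbf{D}_J\mathbf{B}_J^{\alpha,\beta}$ for $\mathbf{P}=\mathbf{B}_J^{\alpha,\beta}\mathbf{D}_J$ gives it in one line. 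Your caveat that the Hermitian (orthogonal-projector) nature of $\mathbf{B}_J^{\alpha,\beta}$ rests on unitarity of the fractional transforms is also worth keeping explicit: the paper simply asserts this property, and it holds only when $\mathbf{F}^{\alpha}$ and $\mathbf{F}^{\beta}$ are unitary (e.g.\ undirected graphs with a normal shift operator), so your version makes visible an assumption the paper leaves implicit.
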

\begin{proof}
	The proof is reported in \ref{AA}.
\end{proof}

\subsection{Joint fractional sampling and perfect recovery}
\label{sec3.2}
For the time-vertex graph signal $\mathbf{X} \in \mathbb{C}^{N \times T}$, the sampled signal is given by
\[
\mathbf{X}_{\mathcal{S}_G \times \mathcal{S}_T} = \mathbf{D}_G \mathbf{X} \mathbf{D}_T,
\]
and the sampling can also be expressed in vector form. For the time-vertex graph signal vector $\bm{x} \in \mathbb{C}^{NT}$ and the joint sampling matrix operator $\mathbf{D}_J \in \mathbb{C}^{NT \times NT}$, the sampled signal is represented as
\begin{equation}
\bm{x}_{\mathcal{S}_J} = \mathbf{D}_J \bm{x}, \label{DJ}
\end{equation}
where $\mathcal{S}_J$ denotes the sample set of the time-vertex signal, and $\bm{x}_{\mathcal{S}_J}$ represents the subvector of $\bm{x}$ obtained by extracting the elements indexed by $\mathcal{S}_J$.

We can perfectly recover $\bm{x}$ from $\bm{x}_{\mathcal{S}_J}$ using the recovery operator $\mathbf{R}_J$. Next, we define the concept of bandwidth.
\begin{defn}
\label{defbandwidth}
A signal $\bm{x} \in \mathbb{C}^{NT}$ is said to be bandwidth-limited if its joint spectral representation $\hat{\bm{x}}$ has fewer than $K_J < NT$ non-zero entries. The time bandwidth $K_T$ is defined as the number of non-zero rows in $\hat{\mathbf{X}}$, and the vertex bandwidth $K_G$ as the number of non-zero columns, capturing the support over the time and graph domains, respectively.
\end{defn}

If the projection bandwidth satisfies $K_T < T$ and $K_G < N$, the signal $\mathbf{X}$ is considered jointly bandlimited. It is evident that the general relationship is given by
\[
\max(K_T, K_G) \leq K_J \leq K_T K_G.
\]

Let $\mathcal{S}_G \subseteq \mathcal{V}$ and $\mathcal{S}_T \subseteq \mathcal{T}$ be appropriate subsets. There exist suitable sampling set sizes $M_T \geq K_T$ and $M_G \geq K_G$, such that the signal $\mathbf{X}$ can be perfectly recovered from the sampled signal. The reconstructed signal is given by
\[
\mathbf{X}'= \mathbf{R}_G \mathbf{X}_{\mathcal{S}_G \times \mathcal{S}_T} \mathbf{R}_T^{\mathrm{H}} = \mathbf{R}_G \mathbf{D}_G \mathbf{X} \mathbf{D}_T \mathbf{R}_T^{\mathrm{H}}.
\]
Thus, using the recovery operators $\mathbf{R}_G$ and $\mathbf{R}_T$, $\mathbf{X}$ can be perfectly recovered from the sampled signal $\mathbf{X}_{\mathcal{S}_G \times \mathcal{S}_T}$.

The actual sampling set can be expressed as $\mathcal{S}_J = \mathcal{S}_T \times \mathcal{S}_G$, with a total number of samples $M_J = M_T M_G$. The vectorized form of the recovered signal is given by
\begin{equation}
\bm{x}' = \mathbf{R}_J\bm{x}_{\mathcal{S}_J}=\mathbf{R}_J\mathbf{D}_J \bm{x}. \label{x=RDx}
\end{equation}
Therefore, the recovery operator for the set $\mathcal{S}_J$ is $\mathbf{R}_J = \mathbf{R}_T \otimes \mathbf{R}_G$. The key problem is to determine the conditions under which $\bm{x}$ can be perfectly recovered from the sampled signal $\bm{x}_{\mathcal{S}_J}$. The following theorem addresses this, building on sampling principles from \cite{GFTsamp, GFRFTsamp, GLCTsamp}.
\begin{thm}
For the sampling operator $\mathbf{D}_J \in \mathbb{C}^{NT \times NT}$, the recovery operator $\mathbf{R}_J \in \mathbb{C}^{NT \times NT}$ spans the space $\text{BL}_{K_J}(\mathbf{J}^{\alpha, \beta})$, meaning $\bm{x}$ is $\alpha, \beta$-bandlimited. Furthermore, if $\mathbf{R}_J \mathbf{D}_J$ functions as a projection operator, perfect recovery is achieved as $\bm{x} = \mathbf{R}_J \mathbf{D}_J \bm{x} = \mathbf{R}_J \bm{x}_{\mathcal{S}_J}$ for all $\bm{x} \in \text{BL}_{K_J}(\mathbf{J}^{\alpha, \beta})$.
\end{thm}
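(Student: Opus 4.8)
The idea is to make the bandlimited subspace concrete, turn the sampling map into a full-rank finite linear system, and then close with an elementary property of idempotent operators. First I would identify $\mathrm{BL}_{K_J}(\mathbf{J}^{\alpha,\beta})$ explicitly. Since $\mathbf{J}^{-\alpha,-\beta}=(\mathbf{F}^{\alpha})^{-1}\otimes(\mathbf{F}^{\beta})^{-1}$ is invertible, the condition $\mathbf{B}_J^{\alpha,\beta}\bm{x}=\bm{x}$ is equivalent to $\hat{\bm{x}}=\mathbf{J}^{\alpha,\beta}\bm{x}$ being supported on the joint spectral set $\mathcal{F}_J$, $|\mathcal{F}_J|=K_J$, i.e.\ $\bm{x}=\mathbf{V}_{\mathcal{F}_J}\bm{c}$, where $\mathbf{V}_{\mathcal{F}_J}$ collects the columns of $\mathbf{J}^{-\alpha,-\beta}$ indexed by $\mathcal{F}_J$ and $\bm{c}\in\mathbb{C}^{K_J}$ is the vector of active joint fractional coefficients; for a product support $\mathcal{F}_J=\mathcal{F}_T\times\mathcal{F}_G$ one has $\mathbf{V}_{\mathcal{F}_J}=\mathbf{V}_{\mathcal{F}_T}\otimes\mathbf{V}_{\mathcal{F}_G}$ and $K_J=K_TK_G$. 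Because $\mathbf{F}^{\alpha}$ and $\mathbf{F}^{\beta}$ are invertible, their inverses have linearly independent columns, so $\mathbf{V}_{\mathcal{F}_J}$ has full column rank and $\mathrm{BL}_{K_J}(\mathbf{J}^{\alpha,\beta})=\mathrm{span}(\mathbf{V}_{\mathcal{F}_J})$ is a $K_J$-dimensional subspace of $\mathbb{C}^{NT}$.

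Next I would handle the sampling side. Writing $\mathbf{D}_G=\mathbf{\Psi}_G^{\top}\mathbf{\Psi}_G$ and $\mathbf{D}_T=\mathbf{\Psi}_T^{\top}\mathbf{\Psi}_T$ in the sense of \eqref{Psi} and setting $\mathbf{\Psi}_J:=\mathbf{\Psi}_T\otimes\mathbf{\Psi}_G\in\mathbb{C}^{M_J\times NT}$ with $M_J=M_TM_G$, Kronecker algebra gives $\mathbf{D}_J=\mathbf{\Psi}_J^{\top}\mathbf{\Psi}_J$, $\mathbf{\Psi}_J\mathbf{\Psi}_J^{\top}=\mathbf{I}_{M_J}$ and $\mathbf{\Psi}_J\mathbf{D}_J=\mathbf{\Psi}_J$; hence $\bm{x}_{\mathcal{S}_J}=\mathbf{D}_J\bm{x}$ carries exactly the information in the compressed sample $\mathbf{\Psi}_J\bm{x}$, which for $\bm{x}=\mathbf{V}_{\mathcal{F}_J}\bm{c}\in\mathrm{BL}_{K_J}(\mathbf{J}^{\alpha,\beta})$ equals $(\mathbf{\Psi}_J\mathbf{V}_{\mathcal{F}_J})\bm{c}$. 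The substance hidden in the hypothesis ``$\mathbf{R}_J\mathbf{D}_J$ is a projection'' is that $\mathbf{\Psi}_J\mathbf{V}_{\mathcal{F}_J}\in\mathbb{C}^{M_J\times K_J}$ has full column rank $K_J$ (so necessarily $M_J\ge K_J$, and in the product case $M_T\ge K_T$, $M_G\ge K_G$), equivalently that $\mathbf{D}_J$ is injective on $\mathrm{BL}_{K_J}(\mathbf{J}^{\alpha,\beta})$ --- no nonzero $\alpha,\beta$-bandlimited signal is supported entirely on the unsampled index set --- which is the sampling counterpart of the localization criterion of Theorem~\ref{bandlimited} and exactly the quantity controlled by the optimal sampling-set selection. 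Under this condition $\mathbf{\Psi}_J\mathbf{V}_{\mathcal{F}_J}$ has a left inverse, so one may take $\mathbf{R}_J=\mathbf{V}_{\mathcal{F}_J}(\mathbf{\Psi}_J\mathbf{V}_{\mathcal{F}_J})^{+}\mathbf{\Psi}_J$; then $\mathrm{span}(\mathbf{R}_J)=\mathrm{span}(\mathbf{V}_{\mathcal{F}_J})=\mathrm{BL}_{K_J}(\mathbf{J}^{\alpha,\beta})$, and using $\mathbf{\Psi}_J\mathbf{D}_J=\mathbf{\Psi}_J$ and $(\mathbf{\Psi}_J\mathbf{V}_{\mathcal{F}_J})^{+}(\mathbf{\Psi}_J\mathbf{V}_{\mathcal{F}_J})=\mathbf{I}_{K_J}$ one gets $\mathbf{R}_J\mathbf{D}_J\bm{x}=\mathbf{V}_{\mathcal{F}_J}\bm{c}=\bm{x}$ for every $\bm{x}\in\mathrm{BL}_{K_J}(\mathbf{J}^{\alpha,\beta})$; since $\mathrm{span}(\mathbf{R}_J\mathbf{D}_J)\subseteq\mathrm{BL}_{K_J}(\mathbf{J}^{\alpha,\beta})$, on which $\mathbf{R}_J\mathbf{D}_J$ acts as the identity, $\mathbf{R}_J\mathbf{D}_J$ is idempotent and is precisely the projector onto $\mathrm{BL}_{K_J}(\mathbf{J}^{\alpha,\beta})$, which confirms that the theorem's hypothesis is attainable.

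Finally, for the statement as phrased I would argue abstractly. If $\mathbf{R}_J$ has column span $\mathrm{BL}_{K_J}(\mathbf{J}^{\alpha,\beta})$ and $\mathbf{P}:=\mathbf{R}_J\mathbf{D}_J$ is idempotent with range $\mathrm{BL}_{K_J}(\mathbf{J}^{\alpha,\beta})$ (which, as just noted, is forced once the rank condition above holds, since $\mathrm{span}(\mathbf{P})\subseteq\mathrm{span}(\mathbf{R}_J)$ while $\mathrm{rank}(\mathbf{P})=K_J=\dim\mathrm{BL}_{K_J}(\mathbf{J}^{\alpha,\beta})$), then every $\bm{x}\in\mathrm{BL}_{K_J}(\mathbf{J}^{\alpha,\beta})$ can be written $\bm{x}=\mathbf{P}\bm{y}$, whence $\mathbf{R}_J\mathbf{D}_J\bm{x}=\mathbf{P}^{2}\bm{y}=\mathbf{P}\bm{y}=\bm{x}$; together with $\bm{x}_{\mathcal{S}_J}=\mathbf{D}_J\bm{x}$ this gives the claimed $\bm{x}=\mathbf{R}_J\mathbf{D}_J\bm{x}=\mathbf{R}_J\bm{x}_{\mathcal{S}_J}$ for all $\bm{x}\in\mathrm{BL}_{K_J}(\mathbf{J}^{\alpha,\beta})$. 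The main obstacle I anticipate is the rank/injectivity step of the middle paragraph --- characterizing exactly when $\mathbf{D}_J$ restricts to an injection on $\mathrm{BL}_{K_J}(\mathbf{J}^{\alpha,\beta})$ and linking it cleanly to Theorem~\ref{bandlimited} and to the sample-count inequalities $M_T\ge K_T$, $M_G\ge K_G$ --- together with the bookkeeping of passing between the subvector and zero-padded readings of $\bm{x}_{\mathcal{S}_J}$ so that $\mathbf{R}_J$ is defined consistently in both.
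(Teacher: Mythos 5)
Your argument is correct, and its logical core is the same as the paper's proof in \ref{AB}: an idempotent operator $\mathbf{P}=\mathbf{R}_J\mathbf{D}_J$ whose range is $\text{BL}_{K_J}(\mathbf{J}^{\alpha,\beta})$ acts as the identity on that subspace, and combining this with $\bm{x}_{\mathcal{S}_J}=\mathbf{D}_J\bm{x}$ gives $\bm{x}=\mathbf{R}_J\bm{x}_{\mathcal{S}_J}$. Where you go beyond the paper is in making the hypotheses concrete: the paper simply posits $\mathrm{rank}(\mathbf{D}_J|_{\text{BL}_{K_J}})=\mathrm{rank}(\mathbf{R}_J\mathbf{D}_J|_{\text{BL}_{K_J}})=K_J$ and the relation $\mathbf{D}_J\mathbf{R}_J=\mathbf{I}$ on the image of $\mathbf{D}_J$, and uses these to \emph{derive} idempotence, whereas you take idempotence as stated in the theorem, identify the substantive condition as injectivity of $\mathbf{D}_J$ on the bandlimited subspace (full column rank of $\mathbf{\Psi}_J\mathbf{V}_{\mathcal{F}_J}$, tied back to Theorem~\ref{bandlimited} via $\|\overline{\mathbf{D}}_J\mathbf{B}_J^{\alpha,\beta}\|<1$), and exhibit an explicit $\mathbf{R}_J=\mathbf{V}_{\mathcal{F}_J}(\mathbf{\Psi}_J\mathbf{V}_{\mathcal{F}_J})^{+}\mathbf{\Psi}_J$ satisfying everything --- which is precisely the operator the paper only obtains afterwards in Eq.~\eqref{RJ}. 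This buys a cleaner match between the proof and the theorem's actual wording, and a verification that the hypothesis is attainable rather than vacuous; the paper's version is shorter but leans on unproved consistency assumptions. One caveat in your abstract closing step: that $\mathrm{range}(\mathbf{P})$ equals (rather than is merely contained in) $\text{BL}_{K_J}(\mathbf{J}^{\alpha,\beta})$ does not follow from injectivity of $\mathbf{D}_J$ on the bandlimited space alone --- an adversarial $\mathbf{R}_J$ could still annihilate part of $\mathbf{D}_J(\text{BL}_{K_J})$ --- so $\mathrm{rank}(\mathbf{P})=K_J$ is an assumption on the pair $(\mathbf{R}_J,\mathbf{D}_J)$, exactly as it is (implicitly) in the paper; your constructed $\mathbf{R}_J$ satisfies it, so the gap is one of phrasing rather than substance.
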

\begin{proof}
The proof is reported in \ref{AB}.
\end{proof}

Thus, if the original signal $\bm{x}$ can be perfectly recovered, we have the following
\[
\bm{x} - \mathbf{R}_J \mathbf{D}_J \bm{x} = \bm{x} - \mathbf{R}_J \left( \mathbf{I} - \overline{\mathbf{D}}_J \right) \bm{x} = \bm{x} - \mathbf{R}_J \left( \mathbf{I} - \overline{\mathbf{D}}_J \mathbf{B}^{\alpha, \beta}_J \right) \bm{x}.
\]

The perfect recovery of $\bm{x}$ implies the existence of the matrix $\mathbf{R}_J$, which is equivalent to the invertibility of $\left( \mathbf{I} - \overline{\mathbf{D}}_J \mathbf{B}^{\alpha, \beta}_J \right)$. Hence, we conclude that, 
\[
\mathbf{R}_J = \left( \mathbf{I} - \overline{\mathbf{D}}_J \mathbf{B}^{\alpha, \beta}_J \right)^{-1}.
\]

Therefore, perfect recovery of the original signal is possible when $\left\| \overline{\mathbf{D}}_J \mathbf{B}^{\alpha, \beta}_J \right\| < 1$. If the maximum norm of $\overline{\mathbf{D}}_J \mathbf{B}^{\alpha, \beta}_J$ equals 1, perfect localization on the complement of the sampling set exists, rendering signal recovery unattainable.

Furthermore, since $\bm{x}$ is an $\alpha, \beta$-bandlimited time-vertex graph signal, we have
\[
\left( \mathbf{I} - \overline{\mathbf{D}}_J \mathbf{B}^{\alpha, \beta}_J \right) \bm{x} = \left( \mathbf{I} - \overline{\mathbf{D}}_J \right) \bm{x} = \mathbf{D}_J \bm{x} = \mathbf{D}_J \mathbf{B}^{\alpha, \beta}_J \bm{x}.
\]

By analyzing the preceding equation, it is evident that $\left( \mathbf{I} - \overline{\mathbf{D}}_J \mathbf{B}^{\alpha, \beta}_J \right)$ is equivalent to $\mathbf{D}_J \mathbf{B}^{\alpha, \beta}_J$, leading to the conclusion that $\mathbf{R}_J = \left( \mathbf{D}_J \mathbf{B}^{\alpha, \beta}_J \right)^{-1}$. However, for a degenerate matrix, where $\mathbf{D}_J \mathbf{B}^{\alpha, \beta}_J$ may not be full rank, we define $\mathbf{R}_J = \left( \mathbf{D}_J \mathbf{B}^{\alpha, \beta}_J \right)^{\dag}$ using the pseudo-inverse. Consequently, the recovery operator can be rewritten as
\begin{equation}
\mathbf{R}_J = \left( \mathbf{D}_J \mathbf{B}^{\alpha,\beta}_J \right)^{\dag} = \left( \mathbf{D}_J \mathbf{J}^{-\alpha,-\beta} \mathbf{\Sigma}_{\mathcal{F}_J} \mathbf{J}^{\alpha,\beta} \right)^{\dag} = \mathbf{J}^{-\alpha,-\beta}_{\mathcal{J} \mathcal{F}_J} \left(\mathbf{J}^{-\alpha,-\beta}_{\mathcal{S}_J\mathcal{F}_J} \right)^{\dag}, \label{RJ}
\end{equation}
where, $\mathbf{J}^{-\alpha,-\beta}_{\mathcal{J} \mathcal{F}_J}$ denotes the submatrix of $\mathbf{J}^{-\alpha,-\beta}$ formed by selecting all rows (i.e., $\mathcal{J} = [NT]$) and columns indexed by the joint frequency support $\mathcal{F}_J$, while $\mathbf{J}^{-\alpha,-\beta}_{\mathcal{S}_J \mathcal{F}_J}$ selects rows indexed by the sampling set $\mathcal{S}_J$ and the same columns indexed by $\mathcal{F}_J$.

\subsection{Optimal joint fractional sampling operator}
Effective sampling of graph signals requires not only determining the number of samples but also selecting optimal sampling nodes, as their locations critically affect reconstruction performance. This holds for time-vertex signals as well, where our strategy follows two key principles: minimizing noise sensitivity and maximizing signal localization—aligned with prior works such as \cite{GUncertainty, GFTsamp, GFTEfficient, GFTSSS, Gdualizing, GFRFTsamp, GLCTsamp}.

\subsubsection{JFRFT sampling based on error minimization}
When the joint bandwidth $K_J$ does not exceed the number of joint samples $M_J$, one can formulate sampling set selection as an optimization problem to minimize reconstruction error. For a noisy observation modeled by $\bm{x}' = \mathbf{R}_J (\mathbf{D}_J \bm{x} + \bm{e})$, the resulting error becomes
$$
\bm{\epsilon} = \bm{x}' - \bm{x} = \mathbf{R}_J \bm{e} = \mathbf{J}^{-\alpha,-\beta}_{\mathcal{J} \mathcal{F}_J} \left(\mathbf{J}^{-\alpha,-\beta}_{\mathcal{S}_J\mathcal{F}_J} \right)^{\dag} \bm{e},
$$

\textit{Maximizing singular value of the minimum (MaxSigMin):} Minimizing the \(\ell_2\)-norm of the error yields the following optimization problem,
\[
\mathcal{S}_J^{\text{opt}} = \argmin_{\mathcal{S}_J \subseteq \mathcal{J}} \left\| \bm{\epsilon} \right\|_2=\arg \min_{\mathcal{S}_J \subseteq \mathcal{J}} \left\| \mathbf{J}^{-\alpha,-\beta}_{\mathcal{J} \mathcal{F}_J} \left( \mathbf{J}^{-\alpha,-\beta}_{\mathcal{S}_J \mathcal{F}_J} \right)^{\dagger} \bm{e} \right\|_2.
\]
Using the Cauchy–Schwarz inequality, the upper bound of the error norm can be expressed as
\[
\left\| \bm{\epsilon} \right\|_2 \leq \left\| \mathbf{J}^{-\alpha,-\beta}_{\mathcal{J} \mathcal{F}_J} \right\|_2 \cdot \left\| \left( \mathbf{J}^{-\alpha,-\beta}_{\mathcal{S}_J \mathcal{F}_J} \right)^{\dagger} \right\|_2 \cdot \left\| \bm{e} \right\|_2.
\]
Since $\left\| \mathbf{J}^{-\alpha,-\beta}_{\mathcal{J} \mathcal{F}_J} \right\|_2$ and $\left\| \bm{e} \right\|_2$ are constant with respect to $\mathcal{S}_J$, the objective reduces to minimizing the spectral norm of the pseudo-inverse. This is equivalent to maximizing the smallest singular value of the matrix $\mathbf{J}^{-\alpha,-\beta}_{\mathcal{S}_J \mathcal{F}_J}$, namely
\begin{equation}
\mathcal{S}_J^{\text{opt}} = \argmax_{\mathcal{S}_J \subseteq \mathcal{J}}~ \sigma_{\min} \left( \mathbf{J}^{-\alpha,-\beta}_{\mathcal{S}_J \mathcal{F}_J} \right). \label{SJ1}
\end{equation}

This optimization problem can be efficiently solved using a greedy strategy. At the \(m\)-th iteration, the next vertex \(y^{\text{opt}}\) is chosen by
\[
y^{\mathrm{opt}} = \argmax_{y \in \mathcal{J} \setminus \mathcal{S}_{J_m}}~\sigma_{\min}  \left( \mathbf{J}^{-\alpha,-\beta}_{(\mathcal{S}_{J_m} \cup y) \mathcal{F}_J} \right). 
\]

\textit{Minimizing trace (MinTrac):} Aims to minimize the trace of the error covariance matrix $\mathbf{E}$. This corresponds to minimizing the total energy of the reconstruction error. Formally, the optimization problem is given by
\begin{equation}
\begin{aligned}
	\mathcal{S}_J^{\text{opt}} 
	&= \argmin_{\mathcal{S}_J \subseteq \mathcal{J}} \ \mathrm{tr}(\mathbf{E}) = \argmin_{\mathcal{S}_J \subseteq \mathcal{J}} \ \mathrm{tr}(\bm{\epsilon} \bm{\epsilon}^*) \\
	&= \argmin_{\mathcal{S}_J \subseteq \mathcal{J}} \ \mathrm{tr}\left[ \mathbf{J}^{-\alpha,-\beta}_{\mathcal{J}\mathcal{F}_J} \left( \mathbf{J}^{-\alpha,-\beta}_{\mathcal{S}_J\mathcal{F}_J} \mathbf{J}^{\alpha,\beta}_{\mathcal{S}_J\mathcal{F}_J} \right)^{-1} \mathbf{J}^{\alpha,\beta}_{\mathcal{J}\mathcal{F}_J} \right] \\
	&= \argmin_{\mathcal{S}_J \subseteq \mathcal{J}} \ \mathrm{tr} \left[ \left( \mathbf{J}^{\alpha,\beta}_{\mathcal{S}_J\mathcal{F}_J} \mathbf{J}^{-\alpha,-\beta}_{\mathcal{S}_J\mathcal{F}_J} \right)^{-1} \right].
\end{aligned} \label{SJ2}
\end{equation}
That is, the optimal sampling set minimizes the trace of the inverse of the restricted joint spectral localization operator.

As in the MaxSigMin criterion, a greedy algorithm is used to solve this optimization. At the \(m\)th iteration, the next vertex \(y^{\text{opt}}\) is selected by
\[
y^{\text{opt}} = \argmin_{y \in \mathcal{J} \setminus \mathcal{S}_{J_m}} \ \mathrm{tr} \left[ 
\left( \mathbf{J}^{\alpha,\beta}_{(\mathcal{S}_{J_m} \cup y)\mathcal{F}_J} \mathbf{J}^{-\alpha,-\beta}_{(\mathcal{S}_{J_m} \cup y)\mathcal{F}_J} \right)^{-1} \right].
\]

\subsubsection{JFRFT sampling based on localized basis}
Similarly, when $K_J \leq M_J$, the signal $\bm{x}$ can be reconstructed using the time-vertex localization operator $\mathbf{D}_J$ and the spectral projector $\mathbf{B}^{\alpha,\beta}_J$. Perfect recovery is ensured if $\bm{x}$ satisfies Theorem \ref{bandlimited}. This class of methods typically employs three representative objective functions, with reconstruction error expressed as
\begin{equation}
\bm{\epsilon} =\mathbf{R}_J \bm{e}=\left( \mathbf{D}_J \mathbf{B}^{\alpha,\beta}_J \right)^{\dag} \bm{e}= \left(\mathbf{B}^{\alpha,\beta}_J \mathbf{D}_J \mathbf{B}^{\alpha,\beta}_J\right)^{\dag} \mathbf{D}_J \bm{e}. \label{epsilon}
\end{equation}

\textit{Minimizing Frobenius norm of the pseudo-inverse (MinPinv):} Selects the sampling set that yields a recovery operator with minimal Frobenius norm,
\[
\mathcal{S}^{\mathrm{opt}}_J = \argmin_{\mathcal{S}_J \subseteq \mathcal{J}} \left\| \left(\mathbf{B}^{\alpha,\beta}_J \mathbf{D}_J \mathbf{B}^{\alpha,\beta}_J\right)^{\dag} \mathbf{D}_J \bm{e} \right\|_F.
\]
By applying the Cauchy–Schwarz inequality, the above can be upper bounded as
\[
\left\| \left(\mathbf{B}^{\alpha,\beta}_J \mathbf{D}_J \mathbf{B}^{\alpha,\beta}_J\right)^{\dag} \mathbf{D}_J \bm{e} \right\|_F \leq \left\| \left( \boldsymbol{\Sigma}_{\mathcal{F}_J} \mathbf{J}^{\alpha,\beta} \mathbf{D}_{\mathcal{S}_J} \right)^{\dag} \right\|_F \cdot \left\| \mathbf{D}_{\mathcal{S}_J} \bm{e} \right\|_F,
\]
where $\mathbf{D}_{\mathcal{S}_J} = \mathbf{D}_J$ is introduced purely for notational consistency. It denotes the submatrix of the time-vertex domain localization operator corresponding to the sampling index set $\mathcal{S}_J$. Since $\left\| \mathbf{D}_{\mathcal{S}_J} \bm{e} \right\|_F$ is constant, the optimization reduces to
\begin{equation}
\mathcal{S}^{\mathrm{opt}}_J = \argmin_{\mathcal{S}_J \subseteq \mathcal{J}} \left\| \left( \boldsymbol{\Sigma}_{\mathcal{F}_J} \mathbf{J}^{\alpha,\beta} \mathbf{D}_{\mathcal{S}_J} \right)^{\dag} \right\|_F = \argmin_{\mathcal{S}_J \subseteq \mathcal{J}} \sum_{i=1}^{K_J} \frac{1}{\sigma_i\left(\boldsymbol{\Sigma}_{\mathcal{F}_J} \mathbf{J}^{\alpha,\beta} \mathbf{D}_{\mathcal{S}_J}\right) }. \label{SJ3}
\end{equation}

This objective is addressed using the same greedy selection strategy as outlined in the preceding methods
\[
y^{\mathrm{opt}} = \argmin_{y \in \mathcal{J} \setminus \mathcal{S}_{J_m}} \sum_{i=1}^{K_J} \frac{1}{\sigma_i\left(\boldsymbol{\Sigma}_{\mathcal{F}_J} \mathbf{J}^{\alpha,\beta} \mathbf{D}_{(\mathcal{S}_{J_m} \cup y)}\right) }. 
\]

\textit{Maximizing singular value (MaxSig):} Seeks to maximize the Frobenius norm of $\mathbf{B}^{\alpha,\beta}_J \mathbf{D}_J \mathbf{B}^{\alpha,\beta}_J$, thereby promoting numerical stability,
\begin{equation}
\mathcal{S}^{\mathrm{opt}}_J = \argmax_{\mathcal{S}_J \subseteq \mathcal{J}} \left\| \boldsymbol{\Sigma}_{\mathcal{F}_J} \mathbf{J}^{\alpha,\beta} \mathbf{D}_{\mathcal{S}_J} \right\|_F = \argmax_{\mathcal{S}_J \subseteq \mathcal{J}} \sum_{i=1}^{K_J} \sigma_i\left(\boldsymbol{\Sigma}_{\mathcal{F}_J} \mathbf{J}^{\alpha,\beta} \mathbf{D}_{\mathcal{S}_J}\right) .  \label{SJ4}
\end{equation}

A greedy selection procedure is again utilized, with the optimal vertex chosen at each step according to
\[
y^{\mathrm{opt}} = \argmin_{y \in \mathcal{J} \setminus \mathcal{S}_{J_m}} \sum_{i=1}^{K_J} \sigma_i\left(\boldsymbol{\Sigma}_{\mathcal{F}_J} \mathbf{J}^{\alpha,\beta} \mathbf{D}_{(\mathcal{S}_{J_m} \cup y)}\right).
\]

\textit{Maximizing the volume of the parallelepiped (MaxVol):} Focuses on maximizing the volume formed by the selected rows of the transformation matrix. This is achieved by maximizing the determinant
\[
\mathcal{S}^{\mathrm{opt}}_J = \argmax_{\mathcal{S}_J \subseteq \mathcal{J}}~\det \left[ \mathbf{J}^{-\alpha, -\beta}_{\mathcal{S}_J\mathcal{F}_J} \mathbf{J}^{\alpha,\beta}_{\mathcal{S}_J\mathcal{F}_J} \right],
\]
which can also be equivalently expressed as
\begin{equation}
\mathcal{S}^{\mathrm{opt}}_J = \argmax_{\mathcal{S}_J \subseteq \mathcal{J}}~\det \left[\left(  \mathbf{J}^{-\alpha,-\beta} \boldsymbol{\Sigma}_{\mathcal{F}_J} \mathbf{J}^{\alpha,\beta}\right) _{\mathcal{S}_J} \right] .  \label{SJ5}
\end{equation}

Similarly, a greedy algorithm is adopted, where the vertex selected at the \( m \)-th iteration satisfies
\[
y^{\mathrm{opt}} = \argmax_{y \in \mathcal{J} \setminus \mathcal{S}_{J_m}}~\det \left[ \left( \mathbf{J}^{-\alpha,-\beta} \boldsymbol{\Sigma}_{\mathcal{F}_J} \mathbf{J}^{\alpha,\beta}  \right)_{(\mathcal{S}_{J_m} \cup y)} \right].
\]

Precise localization in both the joint time-vertex and spectral domains concentrates signal energy on specific nodes. Maximizing this energy over the sampling set $\mathcal{S}_J$ ensures that key local structures and dynamic patterns are effectively captured. For the full domain $\mathcal{J}$, a well-chosen $\mathcal{S}_J$ preserves the essential features of the time-vertex signal. This sampling exhibits a submodular property: as the set grows, the marginal gain from adding new elements diminishes. To solve this subset selection problem, we employ a greedy algorithm to construct an effective sampling operator.

To evaluate the five proposed sampling strategies, we present a case study using time-vertex signals of seasonal temperature and precipitation across U.S. states\footnote{Available: https://www.currentresults.com/Weather/US/weather-averages-index.php}. The graph $\mathcal{G}$ represents 48 contiguous U.S. states, where each node denotes a state \cite{USA,HGFRFT} and edges indicate direct geographic adjacency with unit weights. Alaska and Hawaii are excluded due to their geographic isolation. The graph topology is shown in Fig. \ref{fig01}.
\begin{figure}[h]
\begin{center}
	\begin{minipage}[t]{1\linewidth}
		\centering
		\includegraphics[width=\linewidth]{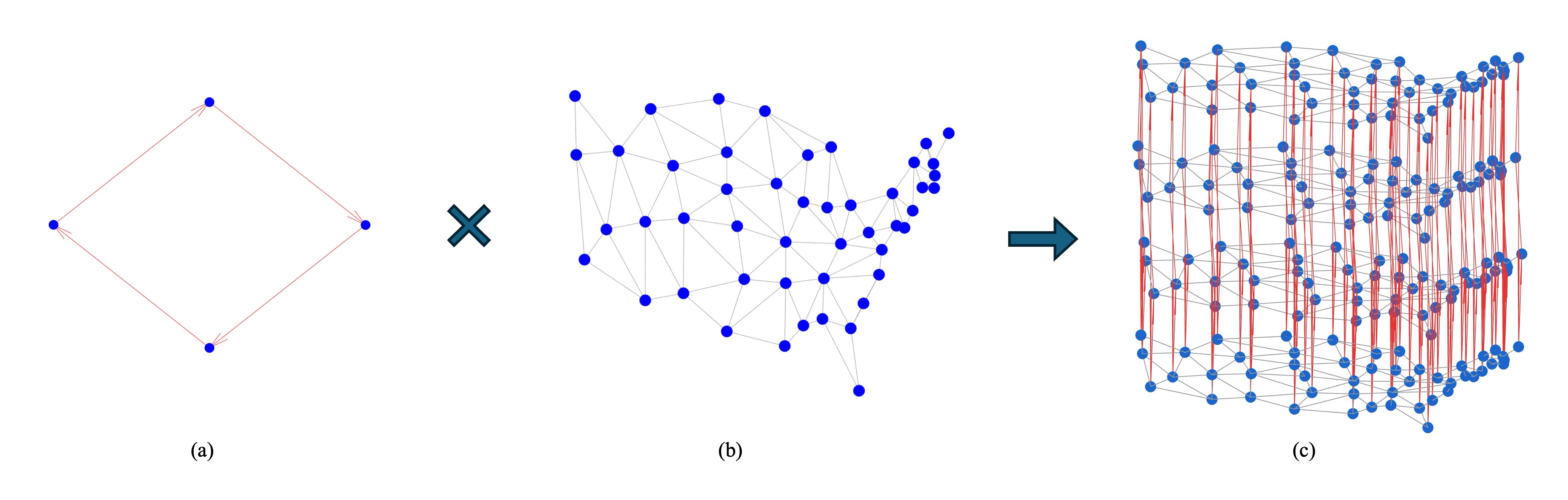}
	\end{minipage}
\end{center}
\vspace*{-20pt}
\caption{Illustration of the joint time-vertex graph constructed via a product graph: (a) directed cyclic time graph over four seasons; (b) underlying 48-node spatial graph of U.S. states; (c) the resulting 192-node joint time-vertex product graph.}
\vspace*{-3pt}
\label{fig01}
\end{figure}

We compare the five proposed JFRFT-based sampling strategies with uniform random sampling. Fig. \ref{fig02} reports the normalized mean squared error (NMSE), computed as the average reconstruction error per node normalized by the original signal magnitude. The time, vertex, and joint bandwidths are set as $K_T = 2$, $K_G = 10$, and $K_J = 20$, respectively. The number of samples $M_J$ increases from 0 to 180, and additive Gaussian noise $\bm{e} \sim \mathcal{N}(0, 0.01^2)$ is added to the sampled signal.
\begin{figure}[h]
\begin{center}
	\begin{minipage}[t]{0.45\linewidth}
		\centering
		\includegraphics[width=\linewidth]{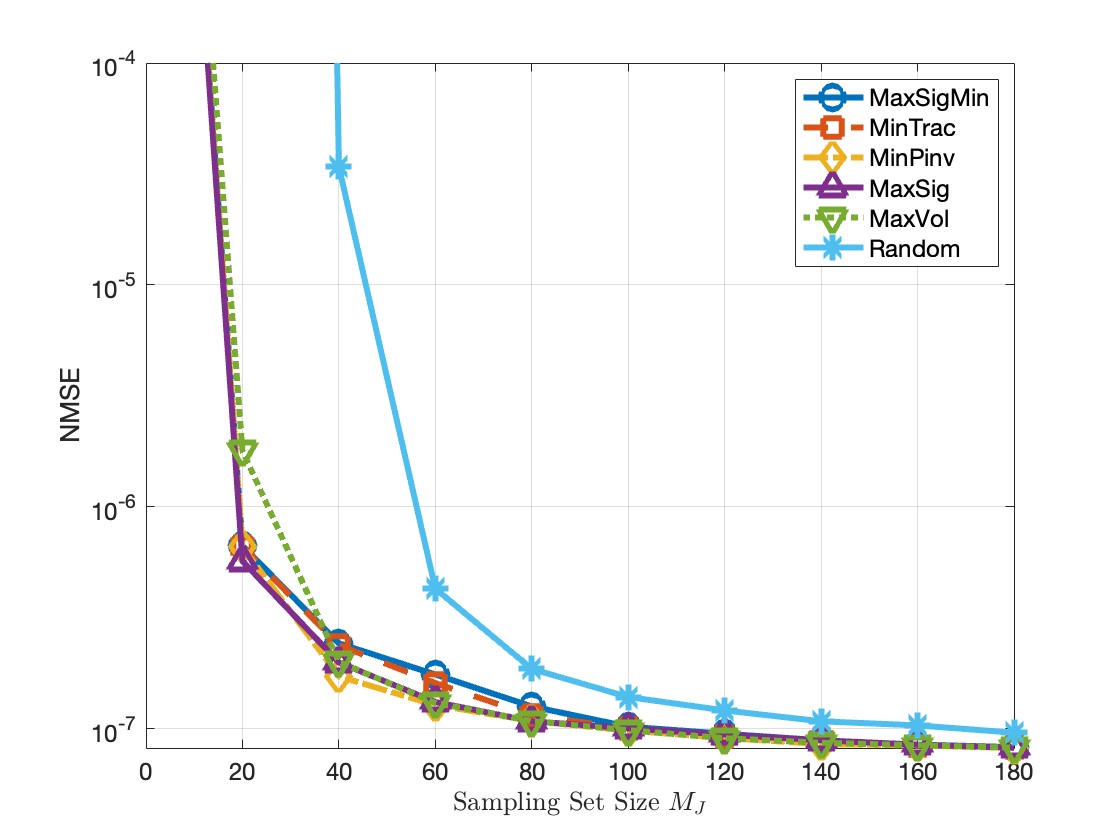}
		\parbox{2cm}{\tiny (a) Temperature signal.}
	\end{minipage}
	\begin{minipage}[t]{0.45\linewidth}
		\centering
		\includegraphics[width=\linewidth]{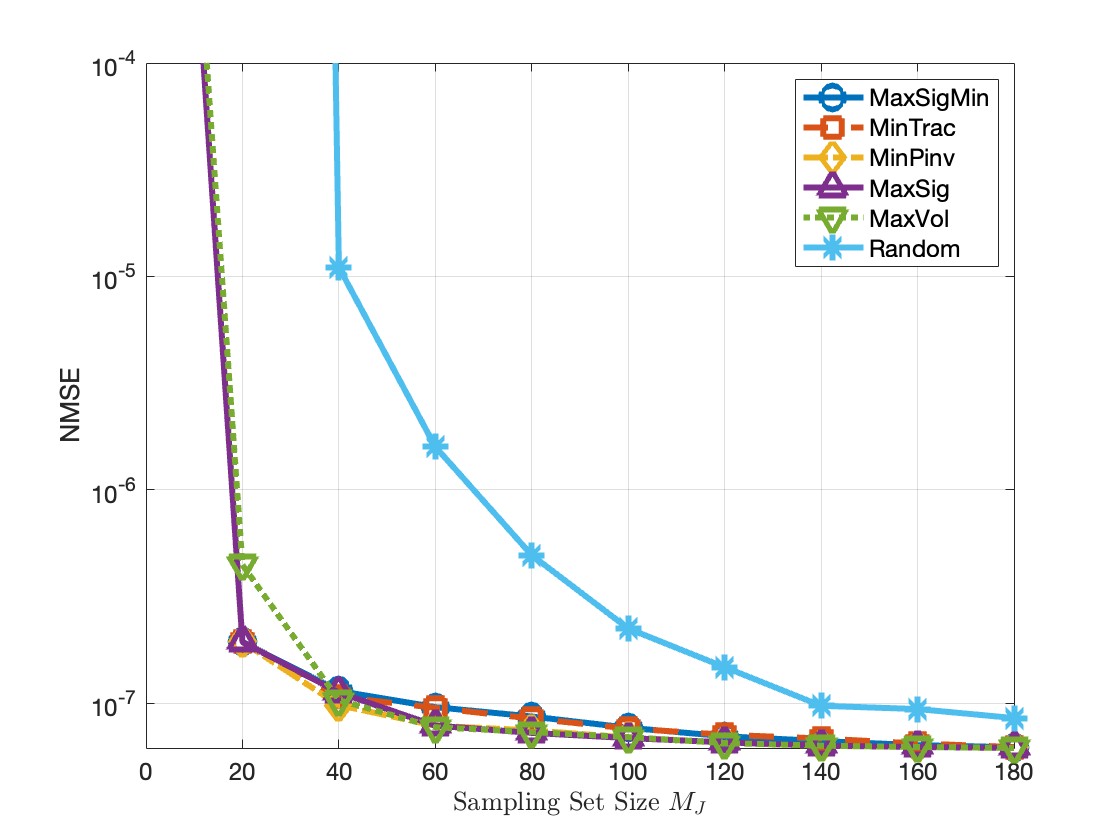}
		\parbox{2cm}{\tiny (b) Precipitation signal.}
	\end{minipage}
\end{center}
\caption{NMSE vs. number of samples for different sampling strategies.}
\vspace*{-3pt}
\label{fig02}
\end{figure}
Panel (a) presents a temperature signal bandlimited under JFRFT with parameters $\alpha = 1$, $\beta = 0.9$, while panel (b) shows a precipitation signal with $\alpha = 0.7$, $\beta = 0.5$. In both cases, we assess the NMSE performance of six sampling schemes as the number of samples increases. Results show that, except for random sampling, all methods achieve accurate recovery with relatively few samples, and reconstruction improves steadily with more data. Based on these findings, we next explore the connection between the five sampling operators and the localized filtering operator.

\section{Relationship between sampling and localization operators}
\label{Relationship}
In joint time-vertex signal processing, characterizing the relationship between sampling operators and the JFRFT domain is essential for principled sampling. We introduce a joint fractional localized filtering operator that captures the intrinsic coupling between time and graph structures \cite{GUncertainty,GFTEfficient,GFTSSS}. This operator unifies signal reconstruction and error bounds, and reformulates five representative sampling strategies in a localized form, linking geometric intuition with spectral analysis.

\subsection{Joint fractional localized filtering operator}
Consider a polynomial filter defined over the joint time-vertex domain. Let $h(\delta_i(\mathbf{S}_T))$ denote the spectral kernel in the time domain, where $\delta_i(\mathbf{S}_T)$ is the $i$-th eigenvalue of the time shift operator $\mathbf{S}_T$. Similarly, $h(\delta_i(\mathbf{S}_G))$ represents the spectral kernel in the vertex domain, ensuring structural consistency across both domains. The filtering operations for DFRFT and GFRFT are expressed as
\[
\mathbf{X} = \mathbf{Y} \mathbf{F}^{-\alpha} h(\mathbf{\Delta}_T) \mathbf{F}^{\alpha}, \quad
\mathbf{X} = \mathbf{F}^{-\beta} h(\mathbf{\Delta}_G) \mathbf{F}^{\beta} \mathbf{Y},
\]
where $\mathbf{X}$ and $\mathbf{Y}$ are the output and input signals, respectively. The diagonal matrices $\mathbf{\Delta}_T = \text{diag}(\delta_0(\mathbf{S}_T), \dots, \delta_{T-1}(\mathbf{S}_T))$ and $\mathbf{\Delta}_G = \text{diag}(\delta_0(\mathbf{S}_G), \dots, \delta_{N-1}(\mathbf{S}_G))$ collect the spectral components for the temporal and graph domains.

Combining these domains, the joint time-vertex filtering operator is given by
\[
\left( \mathbf{F}^{-\alpha} h(\mathbf{\Delta}_T) \mathbf{F}^{\alpha} \right) \otimes \left( \mathbf{F}^{-\beta} h(\mathbf{\Delta}_G) \mathbf{F}^{\beta} \right) = \mathbf{J}^{-\alpha ,-\beta} h(\mathbf{\Delta}_J) \mathbf{J}^{\alpha ,\beta},
\]
where $\mathbf{\Delta}_J$ is the diagonal joint spectral matrix, and $\mathbf{J}^{\alpha,\beta}$ is the JFRFT matrix. Following \cite{Glocal}, the localized filtering operator is defined as
\[
T^{\alpha,\beta}_{h,i}(n) = \sqrt{NT} \sum_{\ell=0}^{NT-1} h(\delta_\ell) \mathrm{j}^*_\ell(i) \mathrm{j}_\ell(n),
\]
where $n$ indexes the signal entries and $i$ denotes the center node of localization. The function $h(\delta_\ell)$ typically represents a low-pass filter, and $\mathrm{j}_\ell$ is the $\ell$-th column of the basis matrix $\mathbf{J}^{\alpha,\beta}$. In matrix form, the localized filtering operator is
\begin{equation}
\mathbf{T}^{\alpha,\beta} = \sqrt{NT}\, \mathbf{J}^{-\alpha,-\beta} h(\mathbf{\Delta}_J) \mathbf{J}^{\alpha,\beta}.
\end{equation}

Reconstruction of the signal is achieved through a weighted linear combination of the localized filtering operator $\mathbf{T}^{\alpha,\beta}$ after sampling.

\subsection{Recovery via the JFRFT localized filtering operator}
The theoretical guarantee of this reconstruction scheme is established in the following theorem.
\begin{thm}\label{thm3}
Let $\bm{x}$ be a time-vertex graph signal that is $\alpha, \beta$-bandlimited with joint spectral bandwidth $K_J$. If $M_J \geq K_J$ and the sampled signal $\bm{x}_{\mathcal{S}_J}$ from the index set $\mathcal{S}_J$ allows perfect recovery, then the original signal can be exactly reconstructed as
\begin{equation}
	\bm{x}' = \mathbf{T}^{\alpha,\beta}_{\mathcal{J}\mathcal{S}_J} \left( \mathbf{T}^{\alpha,\beta}_{\mathcal{S}_J} \right)^{\dag} \bm{x}_{\mathcal{S}_J}, \label{xR=TT}
\end{equation}
where $\mathcal{J}$ is the full set of time-vertex indices and $\mathcal{S}_J \subseteq \mathcal{J}$ is the sampling set.
\end{thm}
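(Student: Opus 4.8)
The plan is to fix the spectral kernel underlying $\mathbf{T}^{\alpha,\beta}$ to be the ideal low-pass kernel of the joint band $\mathcal{F}_J$, reduce the localized-operator formula \eqref{xR=TT} to a consistent-sampling identity on the bandlimited subspace, and then close with a short pseudo-inverse computation. Concretely, take $h(\delta_\ell)=1$ for $\ell\in\mathcal{F}_J$ and $h(\delta_\ell)=0$ otherwise, so that $h(\mathbf{\Delta}_J)=\mathbf{\Sigma}_{\mathcal{F}_J}$ and hence
\[
\mathbf{T}^{\alpha,\beta}=\sqrt{NT}\,\mathbf{J}^{-\alpha,-\beta}\mathbf{\Sigma}_{\mathcal{F}_J}\mathbf{J}^{\alpha,\beta}=\sqrt{NT}\,\mathbf{B}^{\alpha,\beta}_J .
\]
Write $\mathbf{V}:=\mathbf{J}^{-\alpha,-\beta}_{\mathcal{J}\mathcal{F}_J}\in\mathbb{C}^{NT\times K_J}$ for the submatrix of $\mathbf{J}^{-\alpha,-\beta}$ formed by the columns indexed by $\mathcal{F}_J$. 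Since the DFRFT and GFRFT factors of $\mathbf{J}^{\alpha,\beta}=\mathbf{F}^{\alpha}\otimes\mathbf{F}^{\beta}$ are unitary (Section \ref{sec3.1}), $\mathbf{J}^{-\alpha,-\beta}$ is unitary, so $\mathbf{V}^{\mathrm{H}}\mathbf{V}=\mathbf{I}_{K_J}$ and $\mathbf{B}^{\alpha,\beta}_J=\mathbf{V}\mathbf{V}^{\mathrm{H}}$; thus the columns of $\mathbf{V}$ form an orthonormal basis of $\text{BL}_{K_J}(\mathbf{J}^{\alpha,\beta})$, and the $\alpha,\beta$-bandlimitedness of $\bm{x}$ gives $\bm{x}=\mathbf{V}\bm{c}$ with $\bm{c}\in\mathbb{C}^{K_J}$ the vector of nonzero JFRFT coefficients.

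Next I would convert the submatrix operations in \eqref{xR=TT} into matrix products via the joint sampling matrix $\mathbf{\Psi}_J\in\{0,1\}^{M_J\times NT}$, which satisfies $\bm{x}_{\mathcal{S}_J}=\mathbf{\Psi}_J\bm{x}$ and $\mathbf{D}_J=\mathbf{\Psi}_J^{\top}\mathbf{\Psi}_J$. Selecting the columns (resp. rows) indexed by $\mathcal{S}_J$ is right (resp. left) multiplication by $\mathbf{\Psi}_J^{\top}$ (resp. $\mathbf{\Psi}_J$); setting $\mathbf{M}:=\mathbf{\Psi}_J\mathbf{V}=\mathbf{J}^{-\alpha,-\beta}_{\mathcal{S}_J\mathcal{F}_J}\in\mathbb{C}^{M_J\times K_J}$, one gets
\[
\mathbf{T}^{\alpha,\beta}_{\mathcal{J}\mathcal{S}_J}=\sqrt{NT}\,\mathbf{V}\mathbf{V}^{\mathrm{H}}\mathbf{\Psi}_J^{\top}=\sqrt{NT}\,\mathbf{V}\mathbf{M}^{\mathrm{H}},\qquad
\mathbf{T}^{\alpha,\beta}_{\mathcal{S}_J}=\sqrt{NT}\,\mathbf{\Psi}_J\mathbf{V}\mathbf{V}^{\mathrm{H}}\mathbf{\Psi}_J^{\top}=\sqrt{NT}\,\mathbf{M}\mathbf{M}^{\mathrm{H}} .
\]
Using $(c\,\mathbf{A})^{\dag}=c^{-1}\mathbf{A}^{\dag}$ the scalars $\sqrt{NT}$ cancel, and since $\bm{x}_{\mathcal{S}_J}=\mathbf{\Psi}_J\mathbf{V}\bm{c}=\mathbf{M}\bm{c}$, formula \eqref{xR=TT} reduces to $\bm{x}'=\mathbf{V}\,\mathbf{M}^{\mathrm{H}}(\mathbf{M}\mathbf{M}^{\mathrm{H}})^{\dag}\mathbf{M}\,\bm{c}$, which matches the spectral-projector recovery $\mathbf{R}_J\bm{x}_{\mathcal{S}_J}$ of \eqref{RJ} since $\mathbf{R}_J=\mathbf{V}\mathbf{M}^{\dag}$.

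It then remains to check $\mathbf{M}^{\mathrm{H}}(\mathbf{M}\mathbf{M}^{\mathrm{H}})^{\dag}\mathbf{M}=\mathbf{I}_{K_J}$. By the analysis of Section \ref{sec3.2} leading to \eqref{RJ} (equivalently $\|\overline{\mathbf{D}}_J\mathbf{B}^{\alpha,\beta}_J\|<1$), the hypothesis that $\bm{x}_{\mathcal{S}_J}$ permits perfect recovery is exactly the statement that the sampling map is injective on $\text{BL}_{K_J}(\mathbf{J}^{\alpha,\beta})=\mathrm{range}(\mathbf{V})$, i.e. that $\mathbf{M}=\mathbf{J}^{-\alpha,-\beta}_{\mathcal{S}_J\mathcal{F}_J}$ has full column rank $K_J$ — this is where the count $M_J\geq K_J$ enters. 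The standard pseudo-inverse identity $\mathbf{M}^{\mathrm{H}}(\mathbf{M}\mathbf{M}^{\mathrm{H}})^{\dag}=\mathbf{M}^{\dag}$ together with $\mathbf{M}^{\dag}\mathbf{M}=\mathbf{I}_{K_J}$ (valid for full column rank $\mathbf{M}$) then gives $\mathbf{M}^{\mathrm{H}}(\mathbf{M}\mathbf{M}^{\mathrm{H}})^{\dag}\mathbf{M}=\mathbf{I}_{K_J}$, whence $\bm{x}'=\mathbf{V}\bm{c}=\bm{x}$.

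The step I expect to be the main obstacle is pinning down rigorously the equivalence between ``the sampled signal allows perfect recovery'' and the full-column-rank condition on $\mathbf{M}$, together with the subsidiary fact that the chosen $\mathbf{T}^{\alpha,\beta}$ equals $\sqrt{NT}$ times the orthogonal projector onto $\text{BL}_{K_J}(\mathbf{J}^{\alpha,\beta})$ (which rests on the GFRFT being unitary, as assumed in Section \ref{sec3.1}); once this is settled, the remaining pseudo-inverse algebra is routine. I would also remark that the ideal low-pass choice of $h$ is used only for transparency: any kernel vanishing off $\mathcal{F}_J$ and nonzero on $\mathcal{F}_J$ contributes an invertible diagonal block that cancels between $\mathbf{T}^{\alpha,\beta}_{\mathcal{J}\mathcal{S}_J}$ and $(\mathbf{T}^{\alpha,\beta}_{\mathcal{S}_J})^{\dag}$, leaving the reconstruction formula \eqref{xR=TT} unchanged.
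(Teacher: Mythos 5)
Your proposal is correct, and it reaches the result by a genuinely different (and tighter) route than the paper. You specialize the kernel to the ideal low-pass case $h(\mathbf{\Delta}_J)=\boldsymbol{\Sigma}_{\mathcal{F}_J}$, so that $\mathbf{T}^{\alpha,\beta}=\sqrt{NT}\,\mathbf{B}^{\alpha,\beta}_J=\sqrt{NT}\,\mathbf{V}\mathbf{V}^{\mathrm{H}}$ with $\mathbf{V}=\mathbf{J}^{-\alpha,-\beta}_{\mathcal{J}\mathcal{F}_J}$ orthonormal, and then the whole statement collapses, via the selection matrix $\mathbf{\Psi}_J$ and the exact identities $\mathbf{A}^{\dag}=\mathbf{A}^{\mathrm{H}}(\mathbf{A}\mathbf{A}^{\mathrm{H}})^{\dag}$ and $\mathbf{M}^{\dag}\mathbf{M}=\mathbf{I}_{K_J}$ for full-column-rank $\mathbf{M}=\mathbf{J}^{-\alpha,-\beta}_{\mathcal{S}_J\mathcal{F}_J}$, to $\bm{x}'=\mathbf{V}\bm{c}=\bm{x}$; in particular you actually verify the final equality $\bm{x}'=\bm{x}$ from the injectivity-on-$\mathrm{BL}_{K_J}$ reading of ``perfect recovery.'' The paper instead keeps a general low-pass kernel $h$, splits it as $h^{1/2}h^{1/2}$, normalizes by $\rho$, and invokes the approximation \eqref{approximation} (components beyond $K_J$ ``asymptotically vanish'') to reduce \eqref{xR=TT} to the spectral recovery operator \eqref{RJ}, stopping at $\mathbf{R}_J\bm{x}_{\mathcal{S}_J}$ and leaning on Section \ref{sec3.2} for exactness. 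So the paper's argument covers non-ideal kernels but is only approximate, while yours is exact under the ideal-kernel choice — and your closing remark correctly extends exactness to any kernel strictly positive on $\mathcal{F}_J$ and zero off it, which is the honest version of what the paper's approximation is doing. Two caveats worth flagging, both shared with the paper rather than introduced by you: the identification $\mathbf{B}^{\alpha,\beta}_J=\mathbf{V}\mathbf{V}^{\mathrm{H}}$, $\mathbf{V}^{\mathrm{H}}\mathbf{V}=\mathbf{I}_{K_J}$ requires the JFRFT factors to be unitary (for a GFRFT built from a non-normal shift operator $\mathbf{B}^{\alpha,\beta}_J$ is only an oblique projector, and the reverse-order pseudo-inverse step would fail), which the paper also assumes implicitly when it calls $\mathbf{B}^{\alpha,\beta}_J$ an orthogonal projector; and the equivalence ``perfect recovery $\Leftrightarrow$ $\mathbf{M}$ has full column rank'' should be stated as the definition/consequence of the condition $\|\overline{\mathbf{D}}_J\mathbf{B}^{\alpha,\beta}_J\|_2<1$ from Section \ref{sec3.2}, exactly as you indicate.
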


\begin{proof}
The proof is provided in Appendix~\ref{AC}.
\end{proof}

Based on Eqs. \eqref{x=RDx}, \eqref{RJ}, and \eqref{xR=TT}, it follows that the recovery operator $\mathbf{R}_J$ can be approximated as
\begin{equation}
\mathbf{R}_J \approx \mathbf{T}^{\alpha,\beta}_{\mathcal{J}\mathcal{S}_J} \left( \mathbf{T}^{\alpha,\beta}_{\mathcal{S}_J} \right)^{\dag}.
\end{equation}

When the sampled signal is contaminated with Gaussian noise $\bm{e}$ (as in Eq. \eqref{epsilon}), the reconstruction error must be re-evaluated using Eq. \eqref{xR=TT}. In this case, the error $\bm{\epsilon}$ is given by
\begin{equation}
\bm{\epsilon} = \bm{x}' - \bm{x} = \mathbf{R}_J \bm{e} = \mathbf{T}^{\alpha,\beta}_{\mathcal{J}\mathcal{S}_J} \left( \mathbf{T}^{\alpha,\beta}_{\mathcal{S}_J} \right)^{\dag} \bm{e}.
\end{equation}
Simplifying further
\[
\bm{\epsilon} = \mathbf{J}^{-\alpha,-\beta} h\left( \mathbf{\Delta}_J \right) \mathbf{J}^{\alpha,\beta}_{\mathcal{S}_J\mathcal{J}} \left( \mathbf{J}^{-\alpha,-\beta}_{\mathcal{S}_J\mathcal{J}} h\left( \mathbf{\Delta}_J \right) \mathbf{J}^{\alpha,\beta}_{\mathcal{S}_J\mathcal{J}} \right)^{\dag} \bm{e},
\]
\[
= \left( \mathbf{T}^{\alpha,\beta} \right)^{1/2} \left( \left( \mathbf{T}^{\alpha,\beta} \right)^{1/2}_{\mathcal{S}_J\mathcal{J}} \right)^{\dag} \bm{e}.
\]
The corresponding error covariance matrix can then be computed as
\[
\mathbf{E} = \left( \mathbf{T}^{\alpha,\beta} \right)^{1/2} \left( \left( \mathbf{T}^{\alpha,\beta} \right)^{1/2}_{\mathcal{S}_J\mathcal{J}} \right)^{\dag} \mathbf{I}_{\mathcal{S}_J} \left( \left( \mathbf{T}^{\alpha,\beta} \right)^{-1/2}_{\mathcal{S}_J\mathcal{J}} \right)^{\dag} \left( \mathbf{T}^{\alpha,\beta} \right)^{1/2}.
\]
This simplifies to
\[
\mathbf{E} = \left( \mathbf{T}^{\alpha,\beta} \right)^{1/2} \left( \left( \mathbf{T}^{\alpha,\beta} \right)^{-1/2}_{\mathcal{S}_J\mathcal{J}} \left( \mathbf{T}^{\alpha,\beta} \right)^{1/2}_{\mathcal{S}_J\mathcal{J}} \right)^{\dag} \left( \mathbf{T}^{\alpha,\beta} \right)^{1/2}.
\]
Since the reconstruction operator based on filtering is equivalent to the one derived from localization (Eq. \eqref{xR=TT}), the resulting error covariance matrix $\mathbf{E}$ is the same. To minimize the reconstruction error, the sampling strategy can be optimized by minimizing or maximizing matrix-based criteria such as the trace, determinant, or maximum eigenvalue \cite{GLCTsamp}, as summarized in Table~\ref{tab01}.
\begin{table}[t]
\caption{Minimization Methods of Error}
\label{tab01}
\footnotesize
\centering
\begin{tabular}{cl}
	\toprule
	Optimal Methods&Objective Functions\\
	\midrule
	A-optimal & $\min_{\mathcal{S}_J\subseteq \mathcal{J}} \mathrm{tr} \left[ \left( \mathbf{T}^{\alpha,\beta}_{\mathcal{S}_J}\right)^{-1}  \right]$  \\
	D-optimal & $\min_{\mathcal{S}_J\subseteq \mathcal{J}} \mathrm{det} \left[ \left( \mathbf{T}^{\alpha,\beta}_{\mathcal{S}_J}\right)^{-1}  \right]$ \\
	E-optimal & $\min_{\mathcal{S}_J \subseteq \mathcal{J} } \left\|  \left(  \left( \mathbf{T}^{\alpha,\beta}\right) ^{1/2}_{\mathcal{S}_J\mathcal{J}}\right) ^{\dag }  \right\| _{2}$   \\
	T-optimal & $\max_{\mathcal{S}_J\subseteq \mathcal{J}} \mathrm{tr} \left[ \left( \mathbf{T}^{\alpha,\beta}_{\mathcal{S}_J}\right)  \right]$ \\
	\bottomrule
\end{tabular}
\end{table}

\subsection{Localized filter representation of optimal JFRFT sampling operators}
If we define $h(\mathbf{\Delta}_J)$ as an ideal low-pass filter, i.e., $\boldsymbol{\Sigma}_{\mathcal{F}_J} = h(\mathbf{\Delta}_J)$, then the following holds, $\mathbf{T}^{\alpha,\beta} = \mathbf{J}^{-\alpha,-\beta} h(\mathbf{\Delta}_J) \mathbf{J}^{\alpha,\beta} = \mathbf{J}^{-\alpha,-\beta} \boldsymbol{\Sigma}_{\mathcal{F}_J} \mathbf{J}^{\alpha,\beta} = \mathbf{B}_J^{\alpha,\beta}$. This key operator relationship enables the localization of optimal sampling operators through the joint fractional filtering operator.

\textit{MaxSigMin}: Based on Eq. \eqref{SJ1}, the optimal sampling set can be reformulated using the localization operator
\[
\begin{aligned}
\mathcal{S}_J^{\text{opt}} = &\argmax_{\mathcal{S}_J \subseteq \mathcal{J}}~ \sigma_{\min} \left( \mathbf{J}^{-\alpha,-\beta}_{\mathcal{S}_J \mathcal{F}_J} \right)
= \argmax_{\mathcal{S}_J \subseteq \mathcal{J}}~ \sigma_{\min} \left( \mathbf{J}^{\alpha,\beta}_{\mathcal{F}_J \mathcal{S}_J} \right) \\
= &\argmax_{\mathcal{S}_J \subseteq \mathcal{J}}~ \sigma_{\min} \left( \mathbf{J}^{-\alpha,-\beta} \boldsymbol{\Sigma}_{\mathcal{F}_J} \mathbf{J}^{\alpha,\beta} \mathbf{D}_{\mathcal{S}_J} \right)
= \argmin_{\mathcal{S}_J \subseteq \mathcal{J}}~ \left\| \left( \mathbf{T}^{\alpha,\beta}_{\mathcal{J} \mathcal{S}_J} \right)^{\dagger} \right\|_2,
\end{aligned}
\]
where, $\mathbf{T}^{\alpha,\beta}$ serves as the spectral-domain localization operator. At iteration $m$, the optimal vertex can be selected by
\begin{equation}
y^{\text{opt}} = \argmin_{y \in \mathcal{J} \setminus \mathcal{S}_{J_m}}~ \left\| \left( \mathbf{T}^{\alpha,\beta}_{\mathcal{J}, \mathcal{S}_{J_m} \cup y} \right)^{\dagger} \right\|_2.
\label{y1}
\end{equation}

\textit{MinTrac}: The trace minimization of the covariance matrix (Eq. \eqref{SJ2}) can be equivalently expressed as
\[
\mathcal{S}_J^{\text{opt}} = \argmin_{\mathcal{S}_J \subseteq \mathcal{J}}~ \mathrm{tr} \left[ \left( \mathbf{J}^{\alpha,\beta}_{\mathcal{S}_J \mathcal{F}_J} \mathbf{J}^{-\alpha,-\beta}_{\mathcal{S}_J \mathcal{F}_J} \right)^{-1} \right]
= \argmin_{\mathcal{S}_J \subseteq \mathcal{J}}~ \mathrm{tr} \left[ \left( \mathbf{T}^{\alpha,\beta}_{\mathcal{S}_J} \right)^{-1} \right].
\]
Correspondingly,
\begin{equation}
y^{\text{opt}} = \argmin_{y \in \mathcal{J} \setminus \mathcal{S}_{J_m}}~ \mathrm{tr} \left[ \left( \mathbf{T}^{\alpha,\beta}_{\mathcal{S}_{J_m} \cup y} \right)^{-1} \right].
\label{y2}
\end{equation}

\textit{MinPinv}: Based on Eq. \eqref{SJ3}, the pseudo-inverse criterion can also be localized as
\[
\begin{aligned}
\mathcal{S}_J^{\text{opt}} =
&\argmin_{\mathcal{S}_J \subseteq \mathcal{J}}~ \sum_{i=1}^{K_J} \frac{1}{\sigma_i \left( \boldsymbol{\Sigma}_{\mathcal{F}_J} \mathbf{J}^{\alpha,\beta} \mathbf{D}_{\mathcal{S}_J} \right)} \\
= &\argmin_{\mathcal{S}_J \subseteq \mathcal{J}}~ \left\| \left( \mathbf{T}^{\alpha,\beta}_{\mathcal{J} \mathcal{S}_J} \right)^{\dagger} \right\|_F
=\argmin_{\mathcal{S}_J \subseteq \mathcal{J}}~ \mathrm{tr} \left[ \left( \mathbf{T}^{\alpha,\beta}_{\mathcal{S}_J} \right)^{-1} \right],
\end{aligned}
\label{S3}
\]
with the vertex selection criterion,
\begin{equation}
y^{\text{opt}} = \argmin_{y \in \mathcal{J} \setminus \mathcal{S}_{J_m}}~ \mathrm{tr} \left[ \left( \mathbf{T}^{\alpha,\beta}_{\mathcal{S}_{J_m} \cup y} \right)^{-1} \right].
\label{y3}
\end{equation}

\textit{MaxSig}: From Eq. \eqref{SJ4}, we obtain
\[
\mathcal{S}_J^{\text{opt}} = \argmax_{\mathcal{S}_J \subseteq \mathcal{J}}~ \left\| \mathbf{T}^{\alpha,\beta}_{\mathcal{J} \mathcal{S}_J} \right\|_F = \argmax_{\mathcal{S}_J \subseteq \mathcal{J}}~ \mathrm{tr} \left[ \mathbf{T}^{\alpha,\beta}_{\mathcal{S}_J} \right].
\label{S4}
\]
Accordingly,
\begin{equation}
y^{\text{opt}} = \argmax_{y \in \mathcal{J} \setminus \mathcal{S}_{J_m}}~ \mathrm{tr} \left[ \mathbf{T}^{\alpha,\beta}_{\mathcal{S}_{J_m} \cup y} \right].
\label{y4}
\end{equation}

\textit{MaxVol}: The volume maximization in Eq. \eqref{SJ5} can be rewritten as
\[
\mathcal{S}_J^{\text{opt}} = \argmax_{\mathcal{S}_J \subseteq \mathcal{J}}~ \det \left[ \mathbf{T}^{\alpha,\beta}_{\mathcal{S}_J} \right],
\label{S5}
\]
and its vertex selection counterpart becomes
\begin{equation}
y^{\text{opt}} = \argmax_{y \in \mathcal{J} \setminus \mathcal{S}_{J_m}}~ \det \left[ \mathbf{T}^{\alpha,\beta}_{\mathcal{S}_{J_m} \cup y} \right].
\label{y5}
\end{equation}

\begin{table}[t]
\caption{Sampling Method with Objective Functions and Localization Operators}
\label{tab02}
\footnotesize
\centering
\begin{tabular}{cll}
	\toprule
	Methods & \ \ \ \ Objective Functions& Localized Filter Operators \\
	\midrule
	MaxSigMin& $\argmax_{\mathcal{S}_J \subseteq \mathcal{J}}~\sigma_{\min} \left( \mathbf{J}^{-\alpha,-\beta}_{\mathcal{S}_J \mathcal{F}_J} \right)$  &$\argmin_{\mathcal{S}_J \subseteq \mathcal{J}}~ \left\| \left( \mathbf{T}^{\alpha,\beta}_{\mathcal{J} \mathcal{S}_J} \right)^{\dagger} \right\|_2$\\
	MinTrac &$\argmin_{\mathcal{S}_J \subseteq \mathcal{J}}~\mathrm{tr} \left[ \left( \mathbf{J}^{\alpha,\beta}_{\mathcal{S}_J\mathcal{F}_J} \mathbf{J}^{-\alpha,-\beta}_{\mathcal{S}_J\mathcal{F}_J} \right)^{-1} \right]$&$\argmin_{\mathcal{S}_J \subseteq \mathcal{J}}~ \mathrm{tr} \left[ \left( \mathbf{T}^{\alpha,\beta}_{\mathcal{S}_J} \right)^{-1} \right].$\\
	MinPinv& $\argmin_{\mathcal{S}_J \subseteq \mathcal{J}}~\sum_{i=1}^{K_J} \frac{1}{\sigma_i\left(\boldsymbol{\Sigma}_{\mathcal{F}_J} \mathbf{J}^{\alpha,\beta} \mathbf{D}_{\mathcal{S}_J}\right) }$&$\argmin_{\mathcal{S}_J \subseteq \mathcal{J}}~ \mathrm{tr} \left[ \left( \mathbf{T}^{\alpha,\beta}_{\mathcal{S}_J} \right)^{-1} \right]$\\
	MaxSig& $\argmax_{\mathcal{S}_J \subseteq \mathcal{J}}~\sum_{i=1}^{K_J} \sigma_i\left(\boldsymbol{\Sigma}_{\mathcal{F}_J} \mathbf{J}^{\alpha,\beta} \mathbf{D}_{\mathcal{S}_J}\right)$ & $\argmax_{\mathcal{S}_J \subseteq \mathcal{J}}~ \mathrm{tr} \left[ \mathbf{T}^{\alpha,\beta}_{\mathcal{S}_J} \right]$\\
	MaxVol& $ \argmax_{\mathcal{S}_J \subseteq \mathcal{J}}~\det \left[\left(  \mathbf{J}^{-\alpha,-\beta} \boldsymbol{\Sigma}_{\mathcal{F}_J} \mathbf{J}^{\alpha,\beta}\right) _{\mathcal{S}_J} \right] $ &$ \argmax_{\mathcal{S}_J \subseteq \mathcal{J}}~ \det \left[ \mathbf{T}^{\alpha,\beta}_{\mathcal{S}_J} \right]$\\
	\bottomrule
\end{tabular}
\end{table}

Eqs. \eqref{y1}–\eqref{y5} unify the five JFRFT sampling strategies within the localized filter operator framework $\mathbf{T}^{\alpha,\beta}$, with each sampling operator design relying on the spectral kernel $h(\mathbf{\Delta}_J)$. The selected spectral kernel corresponds to the spectral bandlimited operator, bridging the sampling operators and their localized forms. A summary of the corresponding objective functions is given in Table~\ref{tab02}.

\section{Numerical experiments}
\label{Experiments}
This section presents numerical experiments on multiple datasets \cite{JFT,JFRFT,JLCT} to evaluate the performance of JFRFT-based sampling strategies. The seasonal sunshine dataset across U.S. states is used to compare the sampling set selections of five representative methods. The dog walking dynamic mesh dataset with varying node counts is employed to assess computational efficiency and reconstruction accuracy. To examine the impact of different JFRFT parameters $(\alpha, \beta)$, time-series sea clutter signals are utilized. All experiments are conducted in MATLAB R2024a using the GSP Toolbox \cite{GSPBox}.

\subsection{Comparison of sampling set selection}
To investigate the impact of JFRFT on sampling set selection, an experimental setup was constructed using seasonal sunshine duration data across U.S. states, following the structure in Fig. \ref{fig01} . A joint time-vertex graph was formed as the 192-node Cartesian product graph \cite{HGFRFT}. A joint bandlimited signal was defined with time and vertex bandwidths $K_T = 2$ and $K_G = 10$, giving a total joint bandwidth $K_J = 20$. The JFRFT operator was constructed with fixed parameters $\alpha = 0.7$, $\beta = 0.8$, and the original signal was projected onto this subspace to obtain the bandlimited signal.

To preliminarily assess the effect of JFRFT parameters on sampling and reconstruction, $M_J = 20$ sampling nodes were selected using five different strategies. Gaussian noise $\mathcal{N}(0, 0.01^2)$ was added to the observations, and signal reconstruction was performed using Theorem \ref{thm3}. Each experiment was repeated 10 times with independently drawn noise, and the average NMSE was recorded. The best result was achieved by the MaxSigMin method with parameters $(2.2, 0.8)$, yielding a minimal NMSE of $1.61 \times 10^{-6}$, as shown in Fig. \ref{fig03}, where (a) shows the original bandlimited signal and (b) the reconstruction using the optimal sampling strategy.
\begin{figure}[h]
\begin{center}
	\begin{minipage}[t]{0.45\linewidth}
		\centering
		\includegraphics[width=\linewidth]{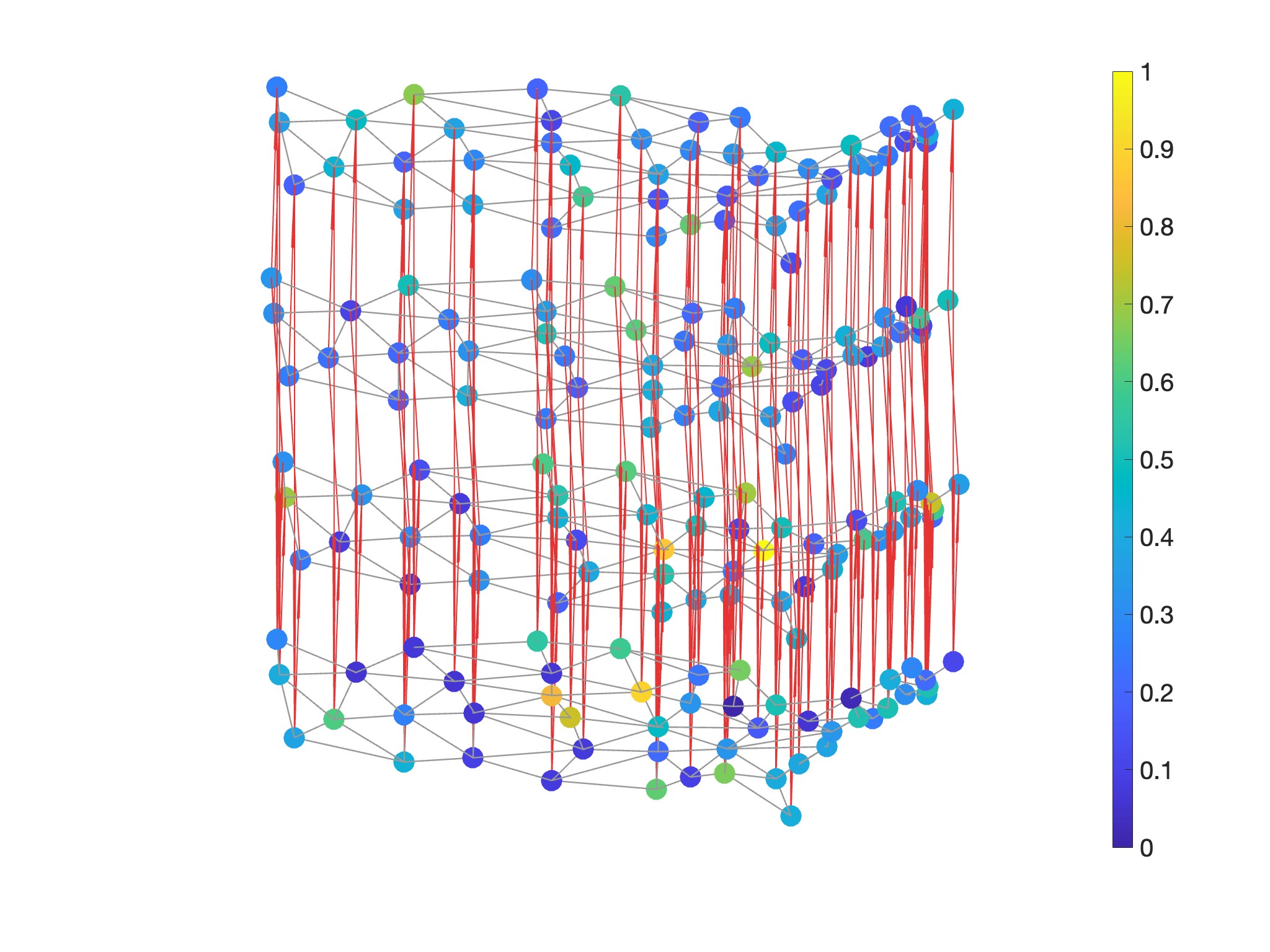}
		\parbox{4cm}{\tiny (a) Original bandlimited sunshine signals.}
	\end{minipage}
	\begin{minipage}[t]{0.45\linewidth}
		\centering
		\includegraphics[width=\linewidth]{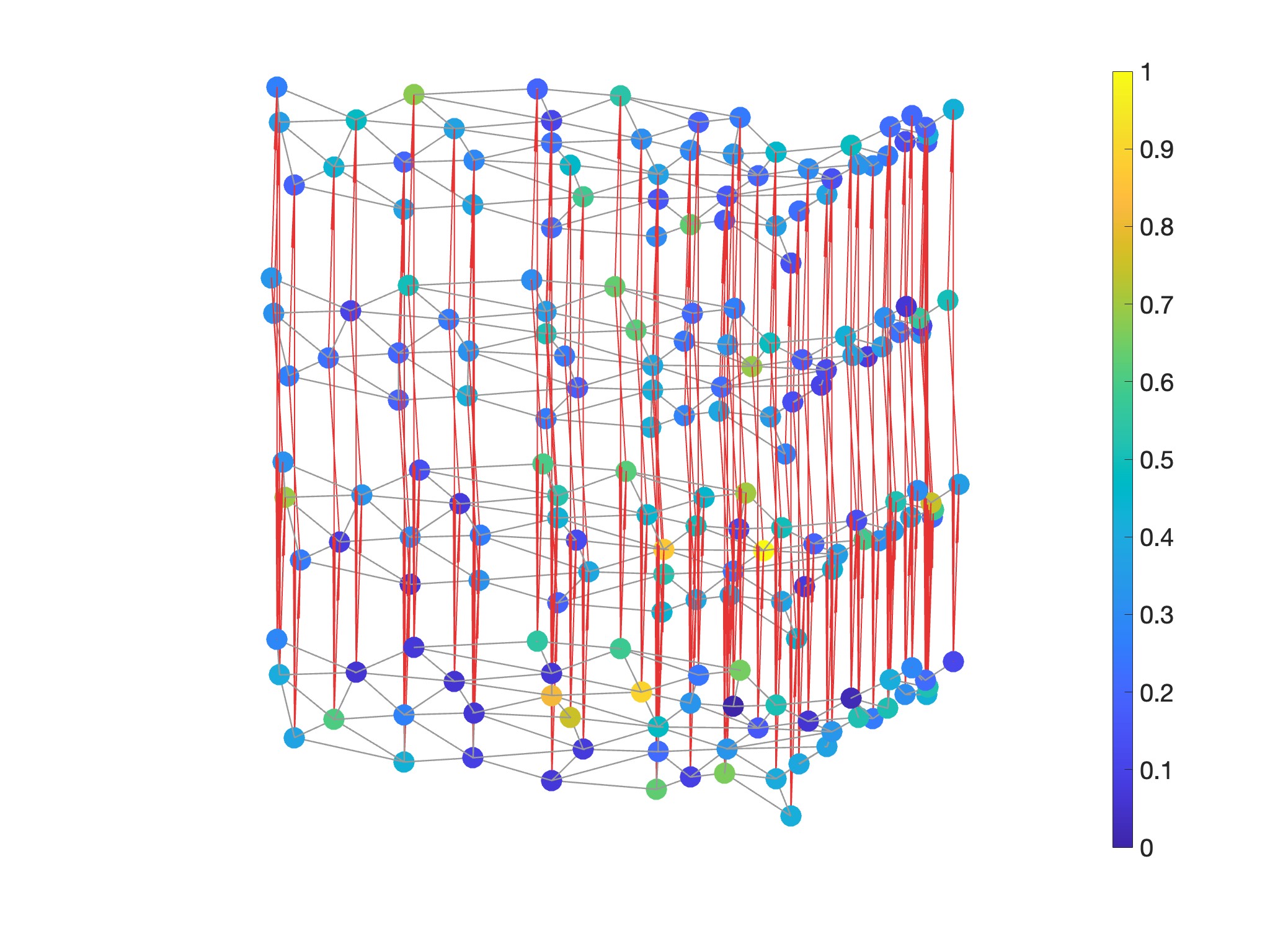}
		\parbox{4.5cm}{\tiny (b) Recovery signals using the MaxSigMin method.}
	\end{minipage}
\end{center}
\caption{Original and recovery time-vertex graph signals.}
\vspace*{-3pt}
\label{fig03}
\end{figure}

Fig. \ref{fig04} visualizes the optimal sampling locations (20 nodes) selected by each method. Notably, MaxSigMin, MinTrac, MinPinv, and MaxVol generally tend to select sampling points that are well distributed across the spatial domain. Among them, MinTrac and MinPinv exhibit fundamentally consistent selection behavior, resulting in identical sampling sets despite differences in their computational runtime. In contrast, MaxSig often fails to account for previously selected nodes during the iterative process, leading to the selection of spatially redundant vertices in close proximity. This redundancy may degrade reconstruction performance due to insufficient spatial coverage. However, the severity of this error can vary with different JFRFT parameters, suggesting that appropriate tuning of $(\alpha, \beta)$ can help mitigate the impact of suboptimal sampling configurations.
\begin{figure}[h]
\begin{center}
	\begin{minipage}[t]{0.9\linewidth}
		\centering
		\includegraphics[width=\linewidth]{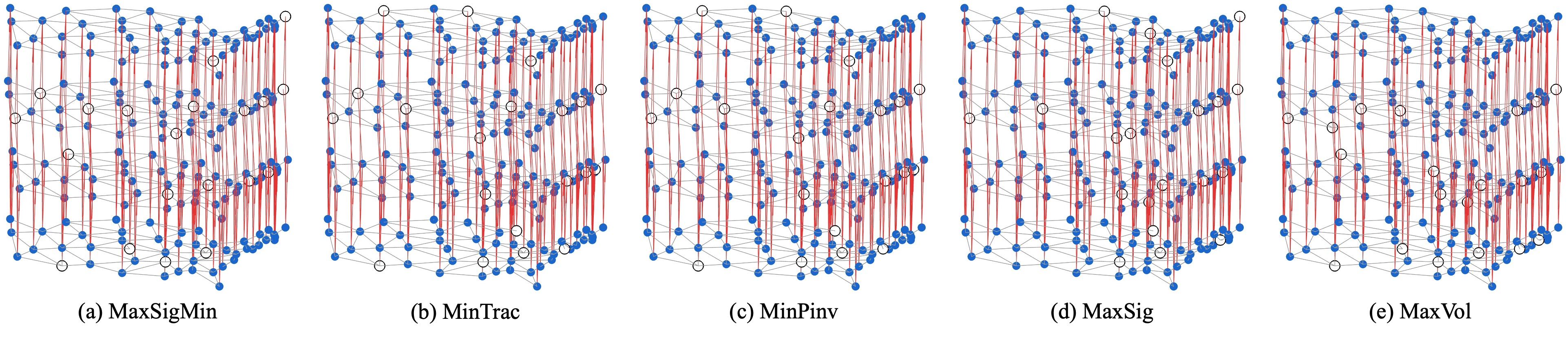}
	\end{minipage}
\end{center}
\vspace*{-10pt}
\caption{Optimal sampling locations (white nodes) selected by five sampling methods.}
\vspace*{-3pt}
\label{fig04}
\end{figure}

\subsection{Comparison of recovery time}
To assess the computational efficiency and reconstruction performance of various sampling strategies under different joint graph sizes, a series of experiments was conducted within the JFRFT framework. The joint graph was constructed using the first 10 columns (time dimension) of the dog walking dataset \cite{JFT,JFRFT} and a spatial graph formed by selecting 40 to 180 rows. Each configuration yielded $N \times T$ joint nodes, with $T = 10$ and $N$ ranging from 40 to 180. The sampling budget was set to $M_J = NT / 10$. Bandlimited signals were generated using the joint operator $\mathbf{B}^{\alpha,\beta}_J$ with fixed JFRFT parameters $\alpha = 0.8$, $\beta = 0.9$, and joint bandwidths $K_T = 2$ and $K_G = 20$.

Five sampling methods were evaluated, with random sampling as a baseline. Each method selected $M_J$ nodes using a localization matrix $\mathbf{T}^{\alpha,\beta}$ aligned with the joint bandlimited operator $\mathbf{B}^{\alpha,\beta}_J$. To simulate noise, Gaussian perturbations $\mathcal{N}(0, 0.05^2)$ were added to the sampled signals. Reconstruction was performed, and both runtime and NMSE were recorded. Each setting was repeated ten times, and average results are reported.

Figure~\ref{fig05} compares runtime and NMSE across varying joint graph sizes. In panel (a), MaxSigMin, MinPinv, and MaxSig show relatively low computational cost with competitive performance, while MinTrace and MaxVol incur higher runtime as graph size grows. Panel (b) shows MinTrace, MinPinv, and MaxVol consistently yield the lowest NMSE, with MaxSigMin and MaxSig slightly behind. Random sampling performs worst overall. Therefore, MinPinv provides the best trade-off between efficiency and accuracy among the methods considered.
\begin{figure}[h]
\begin{center}
	\begin{minipage}[t]{0.45\linewidth}
		\centering
		\includegraphics[width=\linewidth]{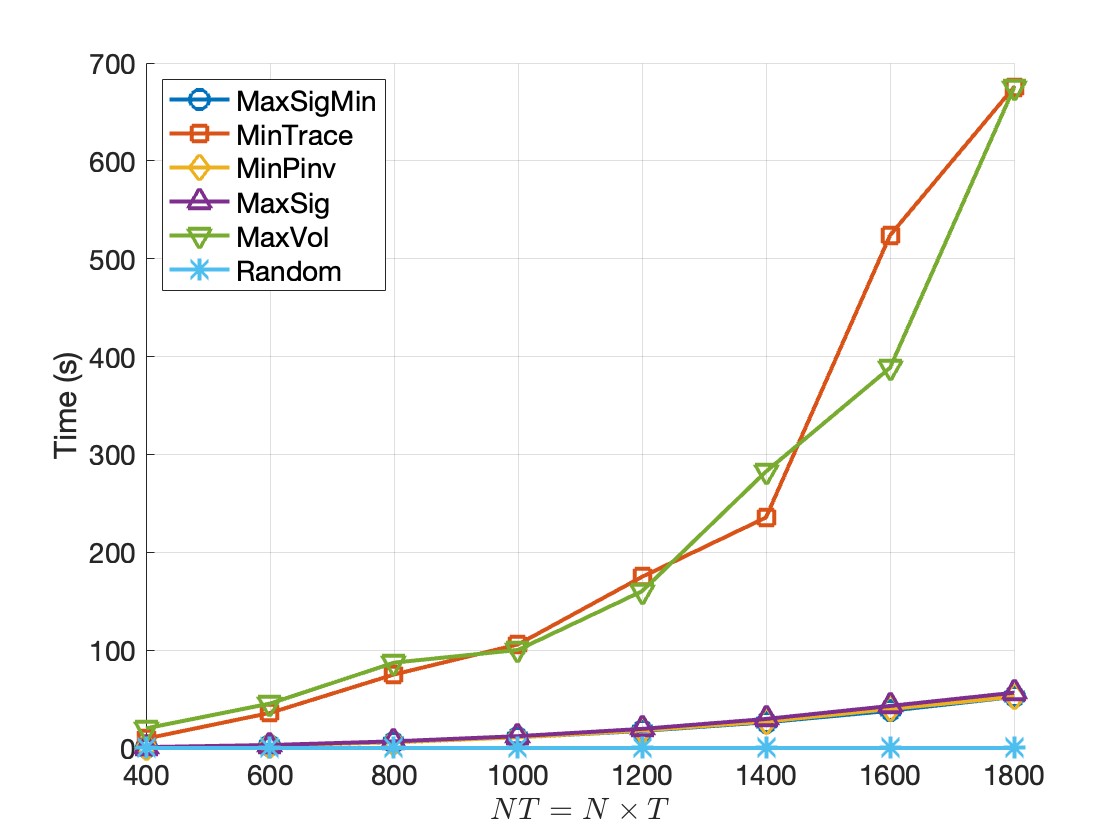}
		\parbox{5cm}{\tiny (a) Comparison of runtime across varying joint graph sizes.}
	\end{minipage}
	\begin{minipage}[t]{0.45\linewidth}
		\centering
		\includegraphics[width=\linewidth]{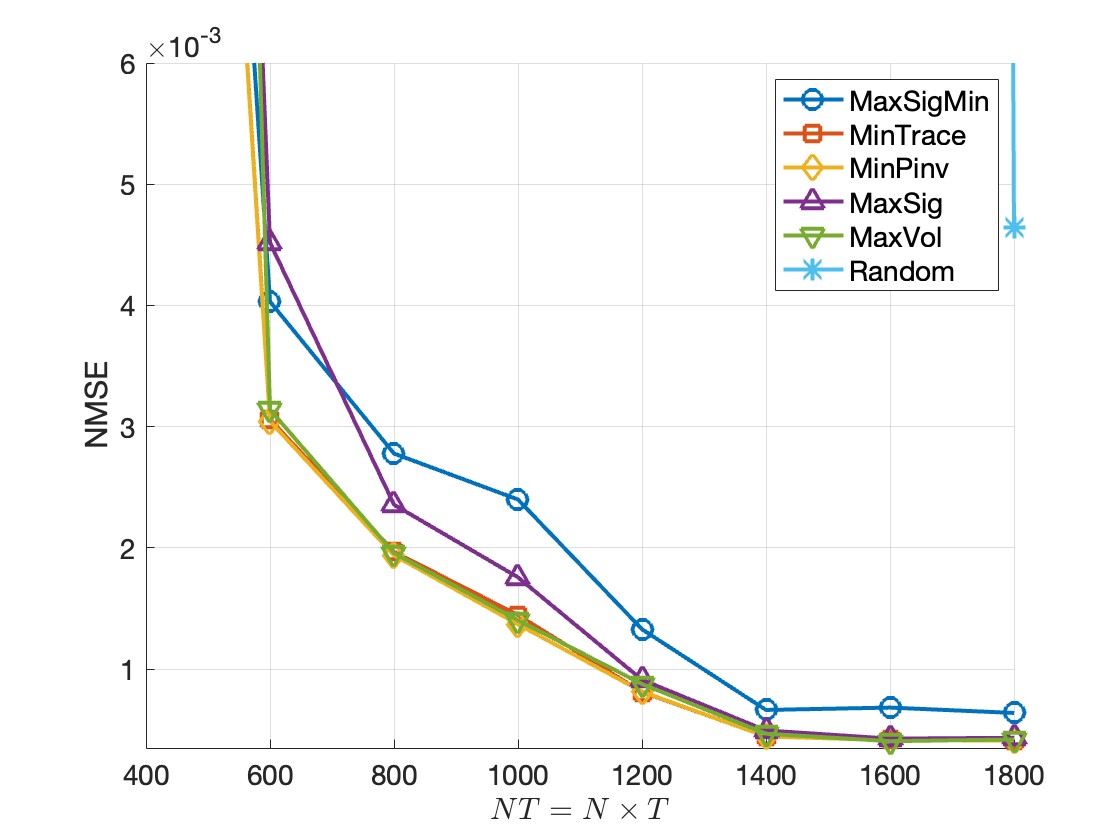}
		\parbox{6cm}{\tiny (b) Comparison of corresponding NMSE for different joint graph sizes.}
	\end{minipage}
\end{center}
\vspace*{-10pt}
\caption{Comparison of running time and NMSE for different joint graph sizes based on sampling methods.}
\vspace*{-3pt}
\label{fig05}
\end{figure}

\subsection{Comparison of recovery errors with different JFRFT parameters}
To investigate the impact of JFRFT parameters on signal sampling and reconstruction performance, we conduct systematic experiments using a real-world sea clutter dataset\footnote{Available: http://soma.mcmaster.ca/ipix/dartmouth/cdf051100.html}. The raw data, obtained from a spaceborne radar system, comprises complex-valued signals from a specific range cell. Preprocessing yields a time-vertex signal with $N=14$ vertices and $T=10$ time steps. Based on the known sea clutter signals, we construct the underlying vertex graph and temporal graph using Gaussian kernels, thereby forming a joint time-vertex representation of the signal. The associated joint localization operator $\mathbf{T}^{\alpha,\beta}$ is then constructed with $K_J = 10$. Notably, we use the raw data directly without applying any bandlimited preprocessing.

To select the JFRFT parameters, we perform a two-stage search: an initial coarse scan followed by a fine-grained search within the range $\alpha, \beta \in [-4, 4]$. For each parameter pair, $M_J=10$ sampling nodes are identified via the bandlimited operator, with the MinPinv method used to select the optimal subset. The original signal is then reconstructed, and the reconstruction error is computed. As shown in Fig.~\ref{fig06}, 
\begin{figure}[h]
\begin{center}
	\begin{minipage}[t]{0.8\linewidth}
		\centering
		\includegraphics[width=\linewidth]{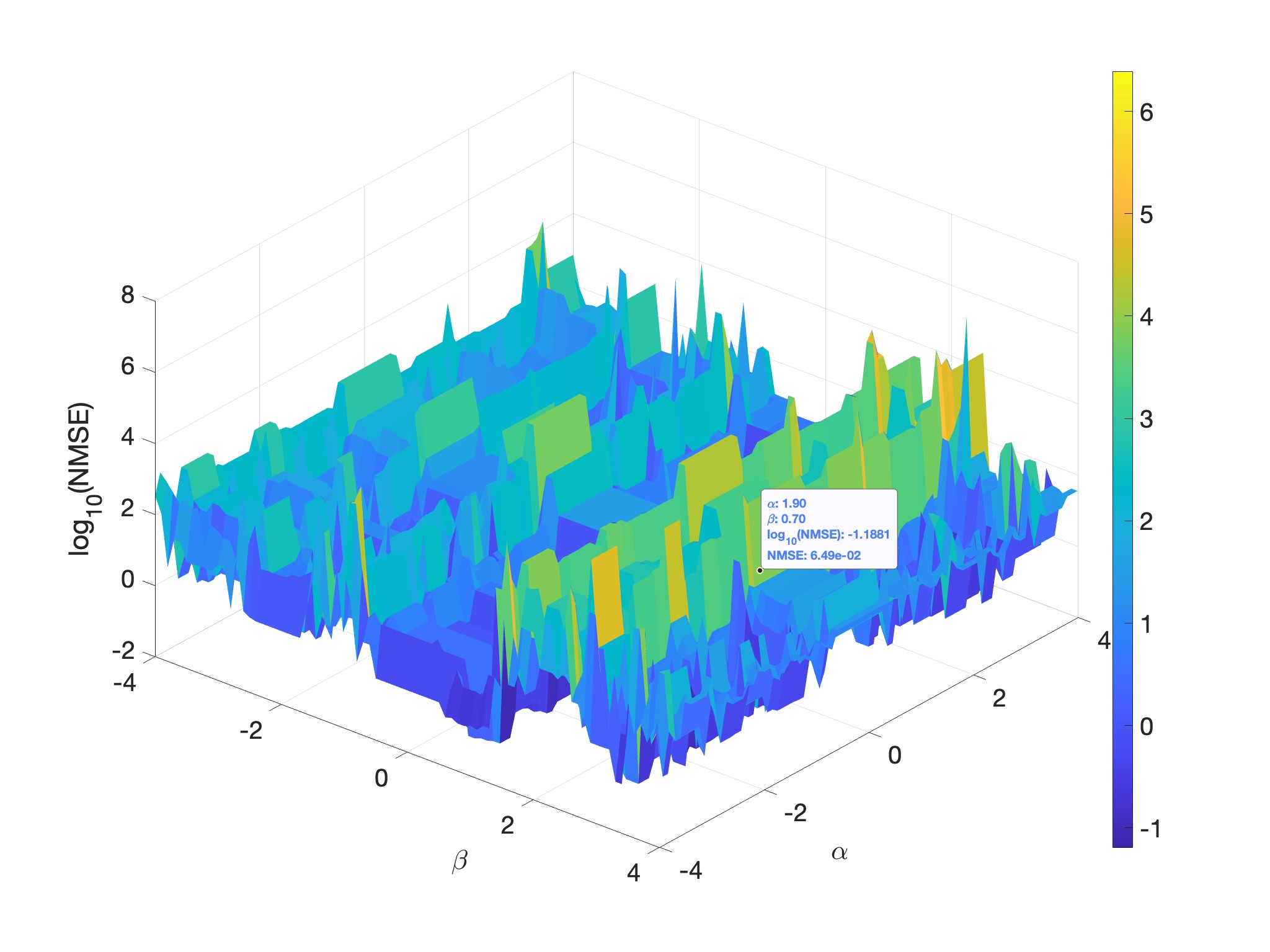}
	\end{minipage}
\end{center}
\vspace*{-20pt}
\caption{NMSE vs. MinPinv for different JFRFT parameters.}
\vspace*{-3pt}
\label{fig06}
\end{figure}
we plot the error surface with $\log_{10}(\text{NMSE})$ as a function of $\alpha$ and $\beta$, revealing that reconstruction performance varies significantly with different parameter choices. A clear optimum is observed at $(\alpha, \beta) = (1.9, 0.7)$, where the minimum NMSE reaches 0.0649. This result highlights the critical role of JFRFT parameters in determining sampling and reconstruction accuracy.

Finally, we analyze the statistical characteristics of the reconstructed signal under the optimal JFRFT setting. We compare it against the JFT-based reconstruction, as well as Rayleigh and Weibull distribution models, by fitting their probability density functions (PDF) and cumulative distribution functions (CDF). As shown in Fig.~\ref{fig07}, this comparison provides insights into the statistical nature of the recovered signal, offering potential guidance for future applications such as target detection or clutter suppression based on reconstruction outputs.
\begin{figure}[h]
\begin{center}
	\begin{minipage}[t]{0.45\linewidth}
		\centering
		\includegraphics[width=\linewidth]{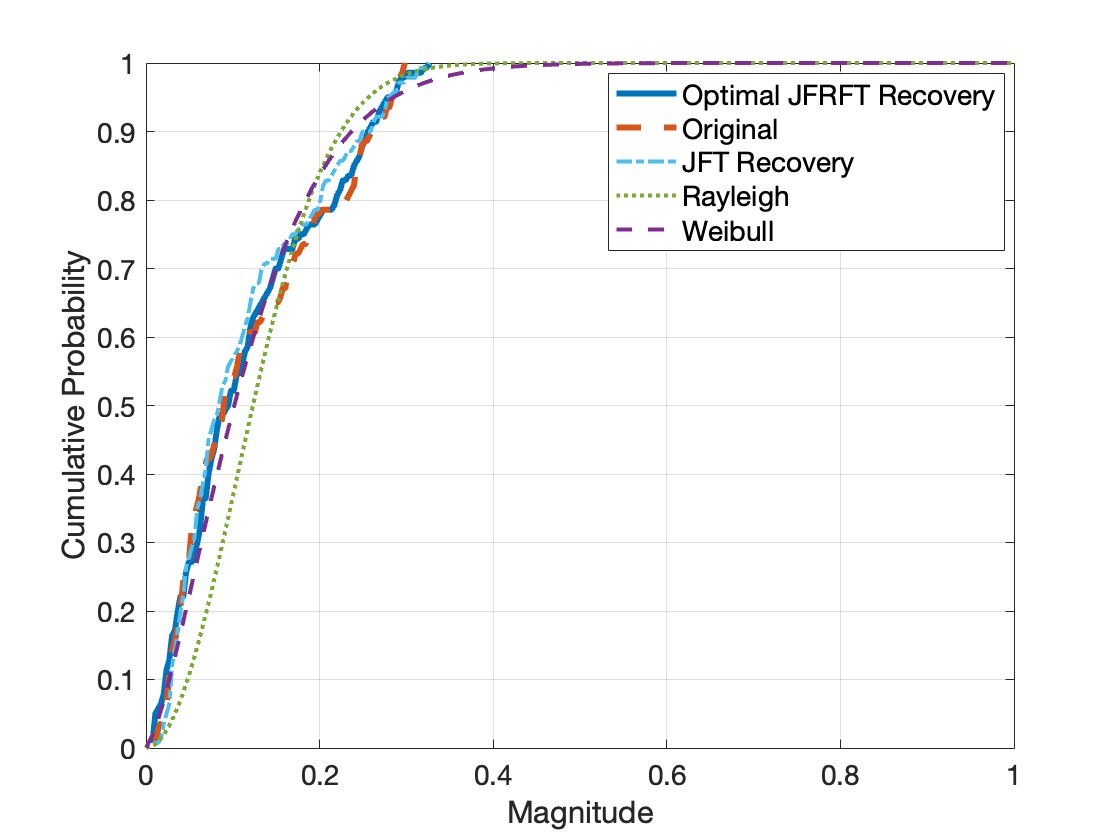}
		\parbox{1cm}{\tiny (a) CDF.}
	\end{minipage}
	\begin{minipage}[t]{0.45\linewidth}
		\centering
		\includegraphics[width=\linewidth]{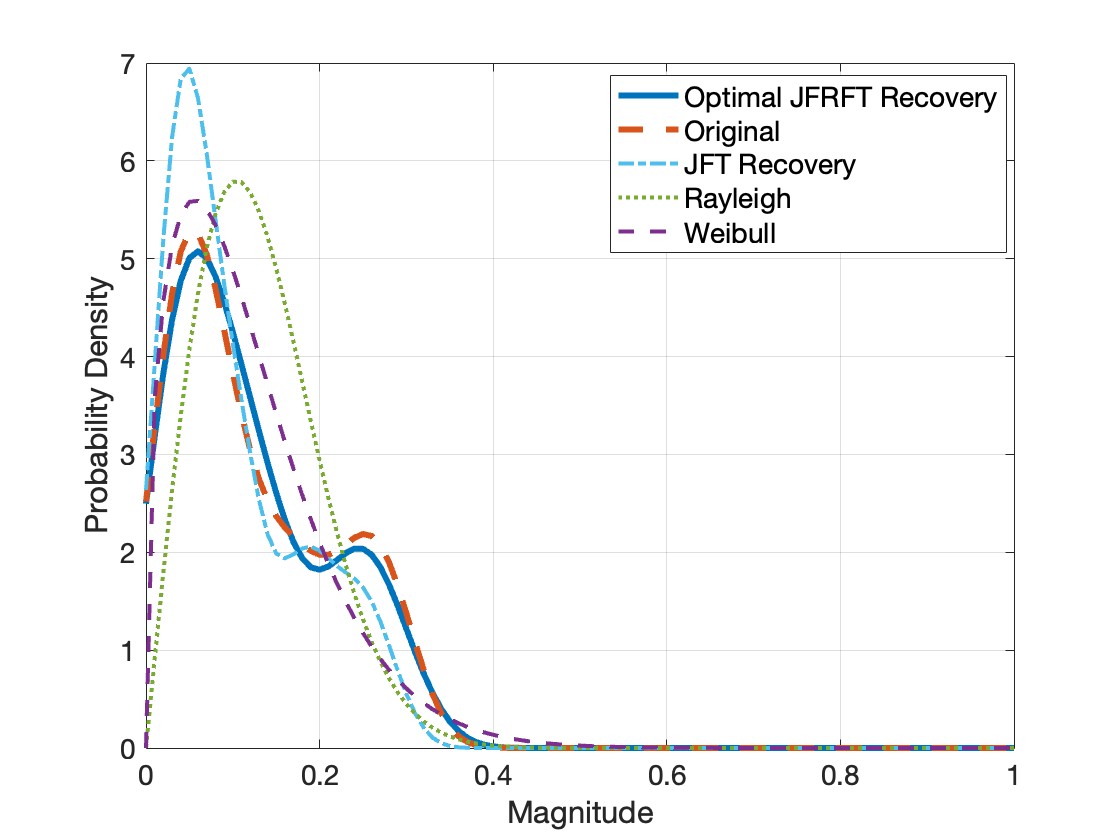}
		\parbox{1cm}{\tiny (b) PDF.}
	\end{minipage}
\end{center}
\caption{Comparison of statistical characteristics of various methods.}
\vspace*{-3pt}
\label{fig07}
\end{figure}

\section{Conclusion}
\label{Conclusion}
In this work, we propose a novel sampling theory in the JFRFT domain, enhancing the expressive power of signal representations in time-frequency analysis and GSP. By establishing the conditions for perfect recovery of jointly bandlimited signals and developing optimal sampling set selection strategies, we construct a unified JFRFT-based sampling framework. Multiple sampling strategies are proposed and systematically compared within this framework. To further improve the efficiency of large-scale joint time-vertex signal processing, we investigate the design of localized sampling operators and integrate them with the proposed strategies. Numerical experiments demonstrate the superior reconstruction accuracy and computational efficiency of our methods, offering a new paradigm for efficient processing of time-varying graph signals.

\appendix
\section{Proof of Theorem 1}
\label{AA}
Suppose $\bm{x} \in \mathbb{C}^{NT}$ satisfies $\mathbf{B}_J^{\alpha,\beta} \bm{x} = \bm{x}$ and $\mathbf{D}_J \bm{x} = \bm{x}$. Then,
\[
\mathbf{B}_J^{\alpha,\beta} \mathbf{D}_J \mathbf{B}_J^{\alpha,\beta} \bm{x}
= \mathbf{B}_J^{\alpha,\beta} \mathbf{D}_J \bm{x}
= \mathbf{B}_J^{\alpha,\beta} \bm{x}
= \bm{x},
\]
showing that $\bm{x}$ is an eigenvector of $\mathbf{B}_J^{\alpha,\beta} \mathbf{D}_J \mathbf{B}_J^{\alpha,\beta}$ associated with eigenvalue 1, and hence $\lambda_{\max} = 1$.

Conversely, assume
\[
\mathbf{B}_J^{\alpha,\beta} \mathbf{D}_J \mathbf{B}_J^{\alpha,\beta} \bm{x} = \bm{x}.
\]
Using the idempotent and Hermitian properties of the projectors, we write
\[
\mathbf{B}_J^{\alpha,\beta} \mathbf{D}_J \mathbf{B}_J^{\alpha,\beta} \bm{x} 
= \mathbf{B}_J^{\alpha,\beta} \mathbf{D}_J  (\mathbf{B}_J^{\alpha,\beta})^{2} \bm{x},
\]
which implies $\mathbf{B}_J^{\alpha,\beta} \bm{x} = \bm{x}$. Then, by the Rayleigh–Ritz theorem, the spectral norm condition
\[
\max_{\bm{x} \neq 0}
\frac{ \bm{x}^* (\mathbf{D}_J \mathbf{B}_J^{\alpha,\beta}) \bm{x} }
{ \| \bm{x} \|^2 } = 1
\]
implies $\mathbf{D}_J \bm{x} = \bm{x}$. Therefore, $\bm{x}$ lies in the intersection of the joint support and frequency subspaces.

Finally, converting back to matrix form yields
\[
\mathbf{B}_G^\beta \mathbf{X} \mathbf{B}_T^\alpha = \mathbf{X}, \quad
\mathbf{D}_G \mathbf{X} \mathbf{D}_T = \mathbf{X}.
\]

\section{Proof of Theorem 2}
\label{AB}
Let $\bm{x} \in \text{BL}_{K_J}(\mathbf{J}^{\alpha,\beta})$, and consider the sampling operator $\mathbf{D}_J$ and the reconstruction operator $\mathbf{R}_J$. Suppose  
\[
\text{rank}(\mathbf{D}_J|_{\text{BL}_{K_J}}) = \text{rank}(\mathbf{R}_J \mathbf{D}_J|_{\text{BL}_{K_J}}) = K_J,
\]  
which ensures that $\mathbf{R}_J$ spans the entire $\alpha,\beta$-bandlimited space $\text{BL}_{K_J}(\mathbf{J}^{\alpha,\beta})$.

Define $\mathbf{O} := \mathbf{R}_J \mathbf{D}_J$. Since both $\mathbf{R}_J$ and $\mathbf{D}_J$ are supported on $\text{BL}_{K_J}(\mathbf{J}^{\alpha,\beta})$, and $\mathbf{D}_J \mathbf{R}_J = \mathbf{I}$ on the image of $\mathbf{D}_J$, we have
\[
\mathbf{O}^2 = \mathbf{R}_J \mathbf{D}_J \mathbf{R}_J \mathbf{D}_J = \mathbf{R}_J (\mathbf{D}_J \mathbf{R}_J) \mathbf{D}_J = \mathbf{R}_J \mathbf{D}_J = \mathbf{O},
\]  
which shows that $\mathbf{O}$ is a projection operator onto $\text{BL}_{K_J}(\mathbf{J}^{\alpha,\beta})$.

Therefore, for any $\bm{x} \in \text{BL}_{K_J}(\mathbf{J}^{\alpha,\beta})$, we have
\[
\bm{x} = \mathbf{O} \bm{x} = \mathbf{R}_J \mathbf{D}_J \bm{x} = \mathbf{R}_J \bm{x}_{\mathcal{S}_J}.
\]

\section{Proof of Theorem 3}
\label{AC}

The reconstruction $\bm{x}' $ can be expressed as
\begin{equation*}
	\begin{aligned}
		\bm{x}' 
		=&\mathbf{T}^{\alpha,\beta}_{\mathcal{J}\mathcal{S}_J} \left( \mathbf{T}^{\alpha,\beta}_{\mathcal{S}_J} \right)^{\dag} \bm{x}_{\mathcal{S}_J} \\
		=& \left[ \mathbf{J}^{-\alpha,-\beta} h\left( \mathbf{\Delta}_J \right) \mathbf{J}^{\alpha,\beta} \right]_{\mathcal{J}\mathcal{S}_J} \left( \left[ \mathbf{J}^{-\alpha,-\beta} h\left( \mathbf{\Delta}_J \right) \mathbf{J}^{\alpha,\beta} \right]_{\mathcal{S}_J} \right)^{\dag} \bm{x}_{\mathcal{S}_J} \\
		=& \mathbf{J}^{-\alpha,-\beta} h\left( \mathbf{\Delta}_J \right) \mathbf{J}^{\alpha,\beta}_{\mathcal{S}_J \mathcal{J}} \left( \mathbf{J}^{-\alpha,-\beta}_{\mathcal{S}_J \mathcal{J}} h\left( \mathbf{\Delta}_J \right) \mathbf{J}^{\alpha,\beta}_{\mathcal{S}_J \mathcal{J}} \right)^{\dag} \bm{x}_{\mathcal{S}_J} \\
		=& \mathbf{J}^{-\alpha,-\beta} h^{\frac{1}{2}} \left( \mathbf{\Delta}_J \right) \left( \mathbf{J}^{-\alpha,-\beta}_{\mathcal{S}_J \mathcal{J}} h^{\frac{1}{2}} \left( \mathbf{\Delta}_J \right) \right)^{\dag} \bm{x}_{\mathcal{S}_J}.
	\end{aligned}
\end{equation*}

This expression can be equivalently reformulated as
\begin{equation}
	\bm{x}' = \mathbf{J}^{-\alpha,-\beta} \left( \frac{h\left( \mathbf{\Delta}_J \right)}{\rho} \right)^{\frac{1}{2}} \left( \mathbf{J}^{-\alpha,-\beta}_{\mathcal{S}_J \mathcal{J}} \left( \frac{h\left( \mathbf{\Delta}_J \right)}{\rho} \right)^{\frac{1}{2}} \right)^{\dag} \bm{x}_{\mathcal{S}_J}, \label{xR}
\end{equation}
where $\rho = \min_{0 \leq i \leq K_J - 1} [h(\mathbf{\Delta}_J)]_{ii}$. When \( i \geq K_J \), the corresponding components of \( \left( h\left( \mathbf{\Delta}_J \right)/\rho \right)^{1/2} \) asymptotically vanish. Under this assumption, the following approximation holds
\begin{equation}
	\left( \mathbf{J}^{-\alpha,-\beta}_{\mathcal{S}_J \mathcal{J}} \left( \frac{h\left( \mathbf{\Delta}_J \right)}{\rho} \right)^{\frac{1}{2}} \right)^{\dag} \approx \left( \frac{h\left( \left[\mathbf{\Delta}_J\right]_{\mathcal{F}_J} \right)}{\rho} \right)^{-\frac{1}{2}} \cdot \left( \mathbf{J}^{-\alpha,-\beta}_{\mathcal{S}_J \mathcal{F}_J} \right)^{\dag}. \label{approximation}
\end{equation}

Substituting \eqref{xR} and \eqref{approximation} yields
\[
\begin{aligned}
	\bm{x}' 
	=& \mathbf{J}^{-\alpha,-\beta}_{\mathcal{J} \mathcal{F}_J} \left( \mathbf{J}^{-\alpha,-\beta}_{\mathcal{S}_J \mathcal{F}_J} \right)^{\dag} \bm{x}_{\mathcal{S}_J} = \mathbf{J}^{-\alpha,-\beta}_{\mathcal{J}} \mathbf{\Sigma}_{\mathcal{F}_J} \mathbf{J}^{\alpha,\beta}_{\mathcal{S}_J \mathcal{J}} \bm{x}_{\mathcal{S}_J} \\
	=&\mathbf{D}_{\mathcal{S}_J} \mathbf{J}^{-\alpha,-\beta}_{\mathcal{J}} \mathbf{\Sigma}_{\mathcal{F}_J} \mathbf{J}^{\alpha,\beta}_{\mathcal{J}} \bm{x}_{\mathcal{S}_J} = \mathbf{D}_J \mathbf{B}^{\alpha,\beta}_J \bm{x}_{\mathcal{S}_J} = \mathbf{R}_J \bm{x}_{\mathcal{S}_J},
\end{aligned}
\]
which completes the proof.

\section*{Declaration of competing interest}
The authors declare that they have no known competing financial interests or
personal relationships that could have appeared to influence the work reported
in this paper.

\section*{Acknowledgments}
This work were supported by the National Natural Science Foundation of China [No. 62171041] and Natural Science Foundation of Beijing Municipality [No. 4242011].



\begin{thebibliography}{999}
	\bibitem{GFTlaplace}
	D.~I.~Shuman, S.~K.~Narang, P.~Frossard, A.~Ortega, and P.~Vandergheynst, The emerging field of signal processing on graphs: Extending high-dimensional data analysis to networks and other irregular domains, IEEE Signal Process. Mag. 30 (3) (2013) 83-98.
	
	\bibitem{GFTadjacency1}
	A.~Sandryhaila and J.~M.~F.~Moura, Big data processing with signal processing on graphs: Representation and processing of massive data sets with irregular structure, IEEE Signal Process. Mag. 31 (5) (2014) 80-90.
	
	\bibitem{GFTadjacency2}
	A.~Sandryhaila and J.~M.~F.~Moura, Discrete signal processing on graphs, IEEE Trans. Signal Process. 61 (7) (2013) 1644-1656.
	
	\bibitem{Gvertex}
	D.~I.~Shuman, B.~Ricaud, and P.~ Vandergheynst, Vertex-frequency analysis on graphs, Appl. Comput. Harmon. Anal. 40 (2) (2016) 260-291.
	
	\bibitem{Goverview}
	A.~Ortega, P.~Frossard, J.~Kova\v{c}evi\'{c}, J.~M.~F.~Moura, and P.~Vandergheynst, Graph signal processing: Overview, challenges, and applications, Proc. IEEE, 106 (5) (2018) 808-828.
	
	\bibitem{Ghistory}
	G.~Leus, A.~G.~Marques, J.~M.~F.~Moura, A.~Ortega and D.~I.~Shuman, Graph signal processing: History, development, impact, and outlook, IEEE Signal Process. Mag. 40 (4) (2023) 49-60.
	
	\bibitem{Graphonfilter} 
	M.~W.~Morency and G.~Leus, Graphon filters: Graph signal processing in the limit, IEEE Trans. Signal Process. 69 (2021) 1740-1754.
	
	\bibitem{Gsamp}
	Y.~Tanaka, Y.~C.~Eldar, A.~Ortega and G.~Cheung, Sampling signals on graphs: From theory to applications, IEEE Signal Process. Mag. 37 (6) (2020) 14-30.
	
	\bibitem{GFTsamp} 
	S.~Chen, R.~Varma, A.~Sandryhaila, and J.~Kova\v{c}evi\'{c}, Discrete signal processing on graphs: Sampling theory, IEEE Trans. Signal Process. 63 (24) (2015) 6510-6523.
	
	\bibitem{GUncertainty} %
	M.~Tsitsvero, S.~Barbarossa and P.~Di Lorenzo, Signals on graphs: Uncertainty principle and sampling, IEEE Trans. Signal Process. 64 (18) (2016) 4845-4860.
	
	\bibitem{GFTSSS} 
	A.~Sakiyama, Y.~Tanaka, T.~Tanaka, and A.~Ortega, Eigendecomposition-free sampling set selection for graph signals, IEEE Trans. Signal Process. 67 (10) (2019) 2679-2692.
	
	\bibitem{GFTEfficient} %
	A.~Anis, A.~Gadde, and A.~Ortega, Efficient sampling set selection for bandlimited graph signals using graph spectral proxies, IEEE Trans. Signal Process. 64 (14) (2016) 3775-3789.
	
	\bibitem{Gdualizing}
	J.~Shi and J.~M.~F.~Moura, Graph signal processing: Dualizing GSP sampling in the vertex and spectral domains, IEEE Trans. Signal Process. 70 (2022) 2883-2898.
	
	\bibitem{GFRFTsamp}
	Y.~Wang and B.~Z.~Li, The fractional Fourier transform on graphs: Sampling and recovery, in 2018 14th IEEE Int. Conf. Signal Process. (ICSP), IEEE, 2018, pp. 1103–1108.
	
	\bibitem{GLCTsamp} 
	Y.~Zhang and B.~Z.~Li, Discrete linear canonical transform on graphs: Uncertainty principle and sampling, Signal Process. 226 (2025) 109668.
	
	\bibitem{estimation} 
	Z.~Yang, G.~Yang, L.~Yang, and Q.~Zhang, A reconstruction method for graph signals based on the power spectral density estimation, Digit. Signal Process. 122 (2022) 103347.
	
	\bibitem{HGFT} 
	F.~Ji and W.~P.~Tay, A Hilbert space theory of generalized graph signal processing, IEEE Trans. Signal Process. 67 (24) (2019) 6188-6203.
	
	\bibitem{JFT} 
	F.~Grassi, A.~Loukas, N.~Perraudin, and B.~Ricaud, A time-vertex signal processing framework: Scalable processing and meaningful representations for time-series on graphs, IEEE Trans. Signal Process. 66 (3) (2018) 817–829.
	
	\bibitem{Jfilter1} 
	L.~Yang, A.~Qi, C.~Huang, and J.~Huang, Graph Fourier transform based on $\ell_1$ norm variation minimization, Appl. Comput. Harmon. Anal. 52 (2021) 348-365.
	
	\bibitem{Jfilter2} 
	F.~Ji, W.~P.~Tay, and A.~Ortega, Graph signal processing over a probability space of shift operators, IEEE Trans. Signal Process. 71 (2023) 1159-1174.
	
	\bibitem{Jfilter3} 
	Z.~Ge, H.~Guo, T.~Wang, and Z.~Yang, The optimal joint time-vertex graph filter design: From ordinary graph Fourier domains to fractional graph Fourier domains, Circuits, Syst. Signal Process. 42 (2023) 4002–4018.
	
	\bibitem{JSpatio-temporal}
	Z.~Cheng, S.~Chen and Y.~Zhang, Spatio-temporal graph complementary scattering networks, in 2022 Int. Conf. Acoust. Speech Signal Process. (ICASSP), IEEE, 2022, pp. 5573-5577.
	
	
	\bibitem{Jsamp1} 
	G.~Ortiz-Jiménez, M.~Coutino, S.~P.~Chepuri, and G.~Leus, Sampling and reconstruction of signals on product graphs, in Proc. Global Conf. Signal Inf. Process. (GlobalSIP), IEEE, 2018, pp. 713-717.
	
	\bibitem{Jsamp2} 
	Z.~Xiao, H.~Fang, S.~Tomasin, G.~Mateos and X.~Wang, Joint sampling and reconstruction of time-varying signals over directed graphs, IEEE Trans. Signal Process. 71 (2023) 2204-2219.
	
	\bibitem{Jsamp3} 
	H.~Sheng, H.~Feng, J.~Yu, F.~Ji, and B.~Hu, Sampling theory of jointly bandlimited time-vertex graph signals, Signal Process. 222 (2024) 109522.
	
	\bibitem{Jprediction}
	E.~Isufi, A.~Loukas, N.~Perraudin, and G.~Leus, Forecasting time series with VARMA recursions on graphs, IEEE Trans. Signal Process. 67 (18) (2019) 4870–4885.
	
	\bibitem{Jforecasting}
	A.~Loukas, E.~Isufi, and N.~Perraudin, Predicting the evolution of stationary graph signals, in 2017 51st Asilomar Conf. Signals, Syst., Comput. (ACSSC), IEEE, 2017, pp. 60–64.
	
	\bibitem{Jlearning}
	A.~Loukas and N.~Perraudin, Stationary time-vertex signal processing, EURASIP J. Adv. Signal Process. 2019 (1) (2019) 1-19.
	
	\bibitem{Jproduct}
	R.~Varma and J.~Kovačević, Smooth signal recovery on product graphs, in 2019 Int. Conf. Acoust. Speech Signal Process. (ICASSP), 2019, pp. 4958–4962
	
	\bibitem{Jreconstruction}
	K.~Qiu, X.~Mao, X.~Shen, X.~Wang, T.~Li, and Y.~Gu, Time-varying graph signal reconstruction, IEEE J. Sel. Top. Signal Process. 11 (6) (2017) 870–883.
	
	\bibitem{GFRFT}
	Y.~Wang, B.~Li and Q.~Cheng, The fractional Fourier transform on graphs, in: 2017 APSIPA Annu. Summit Conf. (APSIPA ASC), IEEE, 2017, pp. 105–110.
	
	\bibitem{GLCT} 
	J.~Y.~Chen, Y.~Zhang, and B.~Z.~Li, Graph linear canonical transform: Definition, vertex-frequency analysis and filter design, IEEE Trans. Signal Process. 72 (2024) 5691-5707.
	
	\bibitem{JFRFT} 
	T.~Alikaşifoğlu, B.~Kartal, E.~Özgünay, and A.~Koç, Joint time-vertex fractional Fourier transform, Signal Process. 233 (2025) 109944.
	
	\bibitem{DFRFT}
	C.~Candan, M.~A.~Kutay and H.~M.~Ozaktas, The discrete fractional Fourier transform, IEEE Trans. Signal Process. 48 (5) (2000) 1329-1337.
	
	\bibitem{DFT}
	S.~C.~Pei, C.~C.~Tseng, M.~H.~Yeh, and J.~J.~Shyu, Discrete fractional Hartley and Fourier transforms, IEEE Trans. Circuits Syst. II, 45 (1998) 665–675.
	
	\bibitem{USA}
	M.~C.~Rabbitt, \textit{The united states geological survey}, Reston, Virginia, US Government Printing Office, 1989.
	
	\bibitem{HGFRFT}
	Y.~Zhang and B.~Z.~Li, The graph fractional Fourier transform in Hilbert space, IEEE Trans. Signal Inf. Process. Netw. 11 (2025) 242-257.
	
	\bibitem{Glocal} 
	N.~Perraudin, B.~Ricaud, D.~I.~Shuman, and P.~Vandergheynst, Global and local uncertainty principles for signals on graphs, APSIPA Trans. Signal Inf. Process. 7 (2018) e3.
	
	\bibitem{JLCT}
	Y.~Zhang and B.~Z.~Li, Joint time-vertex linear canonical transform, Digit. Signal Process. 155 (2024) 104728.
	
	\bibitem{GSPBox}
	N.~Perraudin, J.~Paratte, D.~Shuman, V.~Kalofolias, P.~Vandergheynst, and D.~K.~Hammond, GSPBOX: A toolbox for signal processing on graphs, 2016, arXiv preprint, arXiv: 1408.5781.
	

	
	
\end{thebibliography}
\end{document}